\documentclass[12pt,reqno]{article}

\usepackage[usenames]{color}
\usepackage{amssymb}
\usepackage{amsmath}
\usepackage{amsthm}
\usepackage{amsfonts}
\usepackage{amscd}
\usepackage{graphicx}

\usepackage[colorlinks=true,
linkcolor=webgreen,
filecolor=webbrown,
citecolor=webgreen]{hyperref}

\definecolor{webgreen}{rgb}{0,.5,0}
\definecolor{webbrown}{rgb}{.6,0,0}

\usepackage{color}
\usepackage{fullpage}
\usepackage{float}

\usepackage{graphics}
\usepackage{latexsym}

\setlength{\textwidth}{6.5in}
\setlength{\oddsidemargin}{.1in}
\setlength{\evensidemargin}{.1in}
\setlength{\topmargin}{-.1in}
\setlength{\textheight}{8.4in}


\begin{document}

\theoremstyle{plain}
\newtheorem{theorem}{Theorem}
\newtheorem{corollary}[theorem]{Corollary}
\newtheorem{lemma}{Lemma}
\newtheorem{example}{Example}
\newtheorem*{remark}{Remark}

\begin{center}
\vskip 1cm
{\LARGE\bf Some notes on an identity of Frisch \\ }

\vskip 1cm

{\large
Kunle Adegoke \\
Department of Physics and Engineering Physics \\ Obafemi Awolowo University, 220005 Ile-Ife, Nigeria \\
\href{mailto:adegoke00@gmail.com}{\tt adegoke00@gmail.com}

\vskip 0.2 in

Robert Frontczak \\
Independent Researcher \\ Reutlingen,  Germany \\
\href{mailto:robert.frontczak@web.de}{\tt robert.frontczak@web.de}
}
\end{center}

\vskip .2 in

\begin{abstract}
In this note, we show how a combinatorial identity of Frisch can be applied to prove and generalize some
well-known identities involving harmonic numbers. We also present some combinatorial identities involving
odd harmonic numbers which can be inferred straightforwardly from our results.
\end{abstract}

\noindent 2010 {\it Mathematics Subject Classification}: 05A10; 11B65.

\noindent \emph{Keywords: } Harmonic number; odd harmonic number; binomial coefficient; binomial transform.

\bigskip

\section{Preliminaries}

The binomial coefficients are defined, for non-negative integers $n$ and $m$, by
\begin{equation*}
\binom {n}{m} =
\begin{cases}
\dfrac{n!}{m!(n - m)!}, & \text{$n\geq m$};\\
0, & \text{$n<m$}.
\end{cases}
\end{equation*}
More generally, for complex numbers $r$ and $s$, they are defined by
\begin{equation*}
\binom {r}{s} = \frac{\Gamma (r+1)}{\Gamma (s+1) \Gamma (r-s+1)},
\end{equation*}
where the Gamma function, $\Gamma(z)$, is defined for $\Re(z)>0$ by the integral \cite{Srivastava}
\begin{equation*}
\Gamma (z) = \int_0^\infty e^{- t} t^{z - 1}\,dt.
\end{equation*}
The function $\Gamma(z)$ can be extended to the whole complex plane by analytic continuation. 
It has a simple pole at each of the points $z=\cdots,-3,-2,-1,0$. 
The Gamma function extends the classical factorial function to the complex plane: $(z-1)!=\Gamma(z)$; 
thereby facilitating the computation of the binomial coefficients at non-integer and non-real values.
Closely related to the Gamma function is the psi or digamma function defined by $\psi(z)=\Gamma'(z)/\Gamma(z)$.
It possesses the infinite series expression \cite[p. 14]{Srivastava}
\begin{equation}\label{psi_expression}
\psi (z) = - \gamma + \sum_{k=0}^\infty \left ( \frac{1}{k+1}-\frac{1}{k+z} \right ),
\end{equation}
with $\gamma$ being the Euler-Mascheroni constant. 

Harmonic numbers, $H_x$, $x\in\mathbb C\setminus\mathbb Z^{-}$, defined by the relation
\begin{equation}\label{recurrence}
H_x  = H_{x - 1}  + \frac{1}{x},\quad H_0  = 0,
\end{equation}
are connected to the digamma function through the fundamental relation
\begin{equation}\label{eq.cx26aju}
H_x=\psi(x + 1) + \gamma.
\end{equation}
When each $x$ is a positive integer, say $n$, we have the sequence of harmonic numbers, $(H_n)_{n\in\mathbb Z^+}$ and the recurrence relation~\eqref{recurrence} then gives
\begin{equation}
H_n =\sum_{k=1}^n \frac{1}{k},\quad H_0=0.
\end{equation}

The following combinatorial identity is attributed to Frisch \cite{Frisch}:
\begin{equation}\label{Id_Frisch}
\sum_{k=0}^n (-1)^k \frac{\binom{n}{k}}{\binom{b+k}{c}} = \frac{c}{n+c}\frac{1}{\binom{n+b}{b-c}},\quad \text{$n\in\mathbb Z^+$ and $b,\, c,\, b - c\in\mathbb C\setminus\mathbb Z^{-}$}.
\end{equation}
It is listed in Gould's compendium \cite{Gould} as Identity 4.2 and was used recently by Gould and Quaintance \cite{Gould2}
to prove a new binomial transform identity. Abel \cite{Abel} gave a short proof for Frisch's identity and studied its infinite variant. 
In this note, we report how Frisch's identity~\eqref{Id_Frisch} can be applied to prove and generalize some well-known identities involving harmonic numbers $H_n$. We also present some combinatorial identities involving odd harmonic numbers $O_n$, which can be inferred straightforwardly from our results. Our line of approach consists essentially of utilizing the fact that derivatives of the generalized binomial coefficients yield harmonic numbers. This method is familiar and has appeared in some important earlier research of other mathematicians \cite{Chu1,Chu2,Wang,Wei}.

\section{Results}


\begin{theorem}\label{thm1}
For $n\in\mathbb Z^+$ and $b,\, c,\, b - c\in\mathbb C\setminus\mathbb Z^{-}$, we have
\begin{equation}\label{main_id1}
\sum_{k=0}^n (-1)^k \frac{\binom{n}{k}}{\binom{b+k}{c}}\left ( H_{k+b} - H_{k+b-c}\right ) 
= \frac{c}{n+c} \frac{H_{n+b}-H_{b-c}}{\binom{n+b}{n+c}}
\end{equation}
and
\begin{equation}\label{main_id1b}
\sum_{k = 0}^n (- 1)^k \binom{{n}}{k}\frac{c}{{k + c}}\frac{{H_{k + b} - H_{b - c}}}{{\binom{{k + b}}{{k + c}}}}  
= \frac{{H_{n + b} - H_{n + b - c} }}{{\binom{{b + n}}{c}}}.
\end{equation}
In particular, for $n\in\mathbb Z^+$ and $b\in\mathbb C\setminus\mathbb Z^{-}$ we have
\begin{equation}\label{main_id11}
\sum_{k=0}^n (-1)^k \frac{\binom {n}{k}}{\binom{b+k}{k}}\left ( H_{k+b} - H_{k}\right ) 
= \frac{b}{n+b} H_{n+b}
\end{equation}
and
\begin{equation}\label{main_id11b}
\sum_{k = 0}^n (- 1)^k \binom{{n}}{k}\frac{{H_{k + b} }}{{k + b}} = \frac{{H_{n + b} - H_n }}{{b\binom{{b + n}}{b}}}.
\end{equation}
\end{theorem}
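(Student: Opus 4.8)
The plan is to treat Frisch's identity \eqref{Id_Frisch} as an identity in the complex parameter $b$ and to differentiate both sides with respect to $b$. The engine behind everything is the observation that, by the Gamma-function definition of the binomial coefficient together with $H_x=\psi(x+1)+\gamma$ from \eqref{eq.cx26aju}, logarithmic differentiation in $b$ produces harmonic-number differences. Concretely, writing $\binom{b+k}{c}=\Gamma(b+k+1)/(\Gamma(c+1)\Gamma(b+k-c+1))$ gives $\partial_b\log\binom{b+k}{c}=\psi(b+k+1)-\psi(b+k-c+1)=H_{b+k}-H_{b+k-c}$, hence $\partial_b\binom{b+k}{c}^{-1}=-\binom{b+k}{c}^{-1}(H_{b+k}-H_{b+k-c})$. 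Since all the sums here are finite, differentiation passes through the summation sign without any analytic subtlety beyond noting that the factors are meromorphic in $b$ and that the hypothesis $b,\,c,\,b-c\in\mathbb C\setminus\mathbb Z^-$ keeps us away from the poles.

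For \eqref{main_id1}, I first rewrite the right-hand side of \eqref{Id_Frisch} using $\binom{n+b}{b-c}=\binom{n+b}{n+c}$, so that both sides are expressed through $\binom{n+b}{n+c}=\Gamma(n+b+1)/(\Gamma(n+c+1)\Gamma(b-c+1))$. Differentiating the right-hand side in $b$ then contributes the factor $-(H_{n+b}-H_{b-c})$ via $\partial_b\log\binom{n+b}{n+c}=\psi(n+b+1)-\psi(b-c+1)$. Differentiating the left-hand side term by term with the formula above and cancelling the common factor $-1$ on each side yields \eqref{main_id1} directly.

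For the companion identity \eqref{main_id1b}, I would first record that \eqref{Id_Frisch} exhibits a signed binomial transform pair: with $f(k)=\binom{b+k}{c}^{-1}$ and $g(n)=\tfrac{c}{n+c}\binom{n+b}{n+c}^{-1}$ we have $g(n)=\sum_{k=0}^n(-1)^k\binom{n}{k}f(k)$, and since this transform is an involution, inversion gives $\binom{b+n}{c}^{-1}=\sum_{k=0}^n(-1)^k\binom{n}{k}\tfrac{c}{k+c}\binom{k+b}{k+c}^{-1}$. I then differentiate this inverse relation with respect to $b$ exactly as before, using $\partial_b\log\binom{k+b}{k+c}=H_{k+b}-H_{b-c}$ on the right and $\partial_b\log\binom{b+n}{c}=H_{n+b}-H_{n+b-c}$ on the left, to arrive at \eqref{main_id1b}.

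Finally, the two special cases \eqref{main_id11} and \eqref{main_id11b} follow by setting $c=b$ in \eqref{main_id1} and \eqref{main_id1b}, respectively, and simplifying with $H_0=0$ together with $\binom{n+b}{n+b}=\binom{k+b}{k+b}=1$ and $\binom{b+k}{b}=\binom{b+k}{k}$; dividing \eqref{main_id1b} through by $b$ produces the stated form of \eqref{main_id11b}. The only genuinely delicate points I anticipate are bookkeeping ones: keeping the binomial normalization $\binom{n+b}{b-c}=\binom{n+b}{n+c}$ consistent across the two sides, and correctly identifying the inverse pair so that differentiation of the companion form lands on the intended harmonic-number differences. Differentiating in $b$ rather than $c$ is essential here, since $\partial_c$ would instead produce terms in $H_c$ and $H_{b+k-c}$ that do not match the target sums.
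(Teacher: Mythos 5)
Your proposal is correct and follows essentially the same route as the paper: differentiate Frisch's identity \eqref{Id_Frisch} with respect to $b$, using the logarithmic derivative of the reciprocal binomial coefficients together with $H_x=\psi(x+1)+\gamma$, and obtain the companion identity \eqref{main_id1b} via inversion of the signed binomial transform. The only cosmetic difference is that you invert first and then differentiate, whereas the paper differentiates first and then inverts; since the finite binomial transform commutes with $\partial_b$, the two are interchangeable.
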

\begin{proof}
In Frisch's identity \eqref{Id_Frisch} treat $b$ and $c$ as complex numbers and differentiate w.r.t. $b$ using
\begin{equation}\label{eq.u72pdmn}
\frac{d}{db} \binom{b+k}{c}^{-1} = \binom{b+k}{c}^{-1}\left (\psi(b+k+1-c)-\psi(b+k+1)\right )
\end{equation}
and
\begin{equation}\label{eq.lilpf1k}
\frac{d}{db} \binom{n+b}{n+c}^{-1} = \binom{n+b}{n+c}^{-1}\left (\psi(b+1-c)-\psi(b+n+1)\right ),
\end{equation}
where $\psi(z)$ is the psi function. Simplify, making use of the fundamental relation \eqref{eq.cx26aju}.
Identity~\eqref{main_id1b} is the binomial transform of~\eqref{main_id1}.
\end{proof}

\begin{corollary}
For $n\in\mathbb Z^+$ and $b\in\mathbb C\setminus\mathbb Z^{-}$, $b\ne 0$ we have
\begin{equation}\label{cor_id1}
\sum_{k=0}^n (-1)^k \frac{\binom{n}{k}}{(b+k)^2} = \frac{1}{n+1} \frac{H_{n+b}-H_{b-1}}{\binom{n+b}{n+1}}.
\end{equation}
\end{corollary}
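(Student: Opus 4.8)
The plan is to obtain \eqref{cor_id1} as the special case $c=1$ of the first identity \eqref{main_id1} in Theorem~\ref{thm1}. The key observation is that setting $c=1$ collapses the general weight $H_{k+b}-H_{k+b-c}$ into a single reciprocal by means of the defining recurrence for the harmonic numbers.

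First I would substitute $c=1$ throughout \eqref{main_id1}. On the left-hand side, the generalized binomial coefficient simplifies to $\binom{b+k}{1}=b+k$, so that $1/\binom{b+k}{1}=1/(b+k)$. Next I would invoke the recurrence \eqref{recurrence} in the form $H_{k+b}-H_{k+b-1}=\frac{1}{k+b}$, which is valid since $k+b\in\mathbb C\setminus\mathbb Z^-$. Multiplying these two reciprocals produces exactly $1/(b+k)^2$, turning the left-hand side of \eqref{main_id1} into the summand appearing in \eqref{cor_id1}.

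On the right-hand side, setting $c=1$ gives $\frac{c}{n+c}=\frac{1}{n+1}$, $\binom{n+b}{n+c}=\binom{n+b}{n+1}$, and $H_{b-c}=H_{b-1}$, which assembles precisely the claimed closed form. Equating the two sides then yields \eqref{cor_id1} directly.

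I do not expect a genuine obstacle here, since the corollary is a clean specialization rather than a new computation; the only point requiring care is the admissibility of the parameters. Taking $c=1$ forces the hypothesis $b-c\in\mathbb C\setminus\mathbb Z^-$ to read $b-1\notin\mathbb Z^-$, and combined with $b\in\mathbb C\setminus\mathbb Z^-$ this is exactly the additional restriction $b\ne 0$ recorded in the statement. I would therefore flag this compatibility of hypotheses as the single thing worth verifying, after which the identity is immediate.
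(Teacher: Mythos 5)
Your proposal is correct and is exactly the paper's intended derivation: the corollary is stated immediately after Theorem~\ref{thm1} with no separate proof, precisely because it is the specialization $c=1$ of \eqref{main_id1}, where $\binom{b+k}{1}=b+k$ and the recurrence \eqref{recurrence} give the summand $1/(b+k)^2$. Your check that the hypothesis $b-1\notin\mathbb Z^-$ accounts for the extra condition $b\ne 0$ is also the right (and only) point of care.
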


Identity \eqref{cor_id1} generalizes the well-known identity
\begin{equation}
\sum_{k=0}^n (-1)^k \frac{\binom{n}{k}}{(k+1)^2} = \frac{H_{n+1}}{n+1},
\end{equation}
which can be proved directly via
\begin{align*}
(n+1) \sum_{k=0}^n \binom {n}{k} \frac{(-1)^k}{(k+1)^2} &= \sum_{k=0}^n \frac{n+1}{k+1} \binom {n}{k} \frac{(-1)^k}{k+1} \\
&= \sum_{k=0}^n \binom {n+1}{k+1} \frac{(-1)^k}{k+1} \\
&= \sum_{k=1}^{n+1} \binom {n+1}{k} \frac{(-1)^{k-1}}{k} \\
&= H_{n+1}.
\end{align*}

\begin{theorem}\label{thm2}
For $n\in\mathbb Z^+$ and $b,\, c,\, b - c\in\mathbb C\setminus\mathbb Z^{-}$ we have
\begin{equation}\label{main_id2}
\sum_{k=0}^n (-1)^{k+1} \frac{\binom{n}{k}}{\binom{b+k}{c}}\left ( H_{k+b-c} - H_{c}\right ) 
= \frac{1}{(n+c)\binom{n+b}{n+c}} \left (\frac{n}{n+c} + c (H_{n+c}-H_{b-c})\right ).
\end{equation}
In particular, for $n\in\mathbb Z^+$ and $b\in\mathbb C\setminus\mathbb Z^{-}$ we have
\begin{equation}\label{main_id22}
\sum_{k=0}^n (-1)^{k+1} \binom{n}{k} H_{k+b} = \frac{1}{n\binom{n+b}{n}}.
\end{equation}
\end{theorem}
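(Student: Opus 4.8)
The plan is to differentiate Frisch's identity \eqref{Id_Frisch} with respect to $c$, mirroring the proof of Theorem~\ref{thm1} where the differentiation was performed with respect to $b$. The key derivative formula I need is
\[
\frac{d}{dc}\binom{b+k}{c}^{-1} = \binom{b+k}{c}^{-1}\bigl(\psi(c+1) - \psi(b+k-c+1)\bigr),
\]
which follows from writing $\binom{b+k}{c}^{-1} = \Gamma(c+1)\Gamma(b+k-c+1)/\Gamma(b+k+1)$ and using $\psi = \Gamma'/\Gamma$. Applying this term by term to the left-hand side of \eqref{Id_Frisch} and invoking the fundamental relation \eqref{eq.cx26aju} to replace $\psi(c+1) - \psi(b+k-c+1)$ by $H_c - H_{k+b-c}$, I recover exactly the left-hand side of \eqref{main_id2}, since the factor $(-1)^k(H_c - H_{k+b-c})$ equals $(-1)^{k+1}(H_{k+b-c} - H_c)$.

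For the right-hand side of \eqref{Id_Frisch}, I would first rewrite $\binom{n+b}{b-c} = \binom{n+b}{n+c}$ by the symmetry of the binomial coefficient, so that the target factor depends on $c$ only through $n+c$ and $b-c$. Differentiating $R(c) = \frac{c}{n+c}\binom{n+b}{n+c}^{-1}$ then requires the product rule: the derivative of the rational prefactor contributes $\frac{n}{(n+c)^2}\binom{n+b}{n+c}^{-1}$, while the derivative of the reciprocal binomial contributes $\frac{c}{n+c}\binom{n+b}{n+c}^{-1}\bigl(\psi(n+c+1) - \psi(b-c+1)\bigr)$. This is the step I expect to be the main obstacle, since one must track both contributions and the sign arising from the $b-c$ appearing in the lower index. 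Using \eqref{eq.cx26aju} once more to turn $\psi(n+c+1) - \psi(b-c+1)$ into $H_{n+c} - H_{b-c}$, and then factoring out $\frac{1}{(n+c)\binom{n+b}{n+c}}$, produces the bracketed expression $\frac{n}{n+c} + c(H_{n+c} - H_{b-c})$, matching the right-hand side of \eqref{main_id2}.

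Finally, the special case \eqref{main_id22} follows by setting $c = 0$ in \eqref{main_id2}. With $c = 0$ one has $\binom{b+k}{0} = 1$ and $H_c = H_0 = 0$, so the left-hand side collapses to $\sum_{k=0}^n (-1)^{k+1}\binom{n}{k} H_{k+b}$; on the right, the term $c(H_{n+c} - H_{b-c})$ vanishes while $\frac{n}{n+c} \to 1$, leaving $\frac{1}{n\binom{n+b}{n}}$. It remains only to note that $c = 0$ is admissible, which it is, since $0 \notin \mathbb{Z}^-$ and the remaining hypotheses reduce to $b \in \mathbb{C}\setminus\mathbb{Z}^-$.
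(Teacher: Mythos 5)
Your proposal is correct and follows exactly the paper's route: the paper's proof consists precisely of differentiating Frisch's identity \eqref{Id_Frisch} with respect to $c$ using the same derivative formula $\frac{d}{dc}\binom{b+k}{c}^{-1}=\binom{b+k}{c}^{-1}\left(\psi(c+1)-\psi(b+k+1-c)\right)$, with the product-rule computation on the right-hand side and the specialization $c=0$ left implicit. Your write-up simply supplies those omitted details (the symmetry rewriting $\binom{n+b}{b-c}=\binom{n+b}{n+c}$, the two product-rule terms, and the admissibility of $c=0$), all of which check out.
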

\begin{proof}
Differentiate Frisch's identity \eqref{Id_Frisch} w.r.t. $c$ using
\begin{equation*}
\frac{d}{dc} \binom{b+k}{c}^{-1} = \binom{b+k}{c}^{-1}\left (\psi(c+1)-\psi(b+k+1-c)\right ).
\end{equation*}
\end{proof}

Identity \eqref{main_id22} generalizes the well-known identity
\begin{equation}
\sum_{k=0}^n (-1)^{k} \binom{n}{k} H_{k} = - \frac{1}{n},
\end{equation}
which is the binomial transform of the sequence $H_n$ (see \cite[p. 34]{Boyadzhiev}).

\begin{corollary}
For $n\in\mathbb N_0$ and $b\in\mathbb C\setminus\mathbb Z^{-}$ we have
\begin{equation}\label{eq.auhzvb5}
\sum_{k=1}^n (-1)^{k+1} \frac{\binom{n}{k}}{k\binom{k+b}{k}} = H_{n+b} - H_{b}.
\end{equation}
In particular, 
\begin{equation}
\sum_{k=1}^n (-1)^{k+1} \frac{\binom{n}{k}}{k\binom{k+n}{k}} = H_{2n} - H_{n}.
\end{equation}
\end{corollary}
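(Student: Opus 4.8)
The plan is to prove \eqref{eq.auhzvb5} directly from Frisch's identity \eqref{Id_Frisch} by a telescoping recurrence in $n$, bypassing the harmonic-number refinements of Theorems \ref{thm1} and \ref{thm2} altogether. Write $S_n=\sum_{k=1}^n (-1)^{k+1}\binom{n}{k}/(k\binom{k+b}{k})$ for the left-hand side. The base case $n=0$ is the empty sum, matching $H_b-H_b=0$. For the inductive step I would apply Pascal's rule $\binom{n}{k}=\binom{n-1}{k}+\binom{n-1}{k-1}$ so that $S_n$ splits into two pieces. The first piece reproduces $S_{n-1}$ exactly, since the term $k=n$ drops out because $\binom{n-1}{n}=0$.

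In the second piece I would re-index by $j=k-1$. The crucial algebraic simplification is the Gamma-quotient identity
\[
\frac{1}{(j+1)\binom{j+1+b}{j+1}}=\frac{1}{(j+b+1)\binom{j+b}{j}}=\frac{1}{(b+1)\binom{j+b+1}{j}},
\]
each side equalling $\Gamma(j+1)\Gamma(b+1)/\Gamma(j+b+2)$. This absorbs the stray factor $1/k=1/(j+1)$ and rewrites the second piece as $\frac{1}{b+1}\sum_{j=0}^{n-1}(-1)^{j}\binom{n-1}{j}/\binom{j+b+1}{j}$.

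The key observation is that this last sum carries no harmonic numbers and is a pure instance of Frisch's identity: writing $\binom{j+b+1}{j}=\binom{j+b+1}{b+1}$ and invoking \eqref{Id_Frisch} with $n\mapsto n-1$, $b\mapsto b+1$, $c\mapsto b+1$, the lower index on the right vanishes and produces $\binom{n+b}{0}=1$, leaving $\frac{b+1}{n+b}$. Hence the second piece equals $\frac{1}{b+1}\cdot\frac{b+1}{n+b}=\frac{1}{n+b}$, so we obtain the telescoping recurrence $S_n=S_{n-1}+\frac{1}{n+b}$; summing from $S_0=0$ gives $S_n=\sum_{m=1}^n \frac{1}{m+b}=H_{n+b}-H_b$. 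The stated special case is then simply $b=n$.

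I expect the main obstacle to be conceptual rather than computational: recognizing that the harmonic numbers on the right need \emph{not} be manufactured by differentiation (the paper's recurring device) but emerge naturally from the telescoped tail $\sum 1/(m+b)$, and then checking the two equivalent forms of the Gamma-quotient so that the parameter triple $(n-1,b+1,b+1)$ really does collapse the right-hand side of \eqref{Id_Frisch} to $\tfrac{b+1}{n+b}$. An alternative route avoids induction entirely: one may use the Beta-integral $1/\binom{k+b}{k}=b\int_0^1 t^k(1-t)^{b-1}\,dt$ together with $\sum_{k=1}^n(-1)^{k+1}\binom{n}{k}t^k/k=\int_0^t(1-(1-u)^n)/u\,du$, then integrate by parts to reach $\int_0^1(s^{b}-s^{n+b})/(1-s)\,ds=H_{n+b}-H_b$; here the obstacle migrates to justifying the interchange of summation and integration and the analytic continuation in the complex parameter $b$.
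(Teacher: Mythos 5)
Your proof is correct, and it takes a genuinely different route from the paper's. The paper gives no separate argument for this corollary because it is meant to be read off as the inverse binomial transform of identity \eqref{main_id22} in Theorem~\ref{thm2}: inverting $\sum_{k=0}^n(-1)^{k+1}\binom{n}{k}H_{k+b}=\frac{1}{n\binom{n+b}{n}}$ (the $k=0$ term supplying $H_b$) yields \eqref{eq.auhzvb5} immediately, and \eqref{main_id22} itself was manufactured by the paper's recurring device of differentiating Frisch's identity \eqref{Id_Frisch} with respect to a parameter, i.e., via the psi function. You bypass differentiation entirely: Pascal's rule splits $S_n$ into $S_{n-1}$ plus a correction, your Gamma-quotient chain $\frac{1}{(j+1)\binom{j+1+b}{j+1}}=\frac{\Gamma(j+1)\Gamma(b+1)}{\Gamma(j+b+2)}=\frac{1}{(b+1)\binom{j+b+1}{b+1}}$ checks out, and the degenerate instance of \eqref{Id_Frisch} with parameters $(n-1,\,b+1,\,b+1)$ — where $b-c=0$ collapses the right-hand side to $\frac{b+1}{n+b}$ — turns the correction into $\frac{1}{n+b}$, giving the telescoping recurrence $S_n=S_{n-1}+\frac{1}{n+b}$ and hence $H_{n+b}-H_b$ by \eqref{recurrence}; the parameter conditions are satisfied since $b\notin\mathbb Z^-$ forces $b+1\notin\mathbb Z^-\cup\{0\}$. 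The only cosmetic gap is at $n=1$, where your Frisch instance has upper index $0$, which the paper states only for $n\in\mathbb Z^+$; that case is trivial ($\binom{b+1}{b+1}^{-1}=1$ on both sides), or you can simply start the induction at $n=1$ by direct computation. What your approach buys is an elementary, self-contained proof in which the harmonic numbers emerge from the telescoped tail rather than from derivatives of Gamma functions; what the paper's approach buys is uniformity — the same differentiation device generates the whole family of identities in Theorems~\ref{thm1} and~\ref{thm2} at once, with this corollary falling out as a one-line transform. Your Beta-integral alternative would also work for $\Re(b)>0$ with analytic continuation in $b$, but as you note it carries justification burdens that the induction avoids.
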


\begin{corollary}
For $n\in\mathbb Z^+$ and $b\in\mathbb C\setminus\mathbb Z^{-}$ we have
\begin{equation}\label{cor_id2}
\sum_{k=0}^n (-1)^k \frac{\binom{n}{k}}{\binom{k+b}{k}} H_{k+b} = \frac{b}{n+b} H_{b} - \frac{n}{(n+b)^2}.
\end{equation}
\end{corollary}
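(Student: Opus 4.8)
The plan is to specialize Theorem~\ref{thm2}, specifically identity~\eqref{main_id2}, to the case $c=b$. This is the natural route because the final corollary~\eqref{cor_id2} features the binomial coefficient $\binom{k+b}{k}$ in the denominator, and setting $c=b$ in the general denominator $\binom{b+k}{c}$ yields exactly $\binom{b+k}{b}=\binom{b+k}{k}$. So first I would substitute $c=b$ throughout~\eqref{main_id2} and then simplify each piece carefully.

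Working on the left-hand side, putting $c=b$ turns the factor $H_{k+b-c}-H_c$ into $H_{k+b-b}-H_b = H_k - H_b$, and accounts for the sign $(-1)^{k+1}$. The target sum in~\eqref{cor_id2} involves $H_{k+b}$ rather than $H_k-H_b$, so I expect to need the relation $H_{k+b}=H_{k+b-c}-H_c$ combined with the already-proven identity~\eqref{main_id11} (or a direct manipulation using the recurrence~\eqref{recurrence}) to convert between the two. In other words, the main bookkeeping step will be to re-express $\sum_k(-1)^k\frac{\binom nk}{\binom{k+b}{k}}H_{k+b}$ in terms of $\sum_k(-1)^k\frac{\binom nk}{\binom{k+b}{k}}(H_k-H_b)$ by adding and subtracting, and then evaluating the leftover pieces using Frisch's identity~\eqref{Id_Frisch} itself (which handles the constant term $H_b$ cleanly since $\sum_k(-1)^k\binom nk/\binom{k+b}{k}$ is known in closed form from~\eqref{Id_Frisch} with $c=b$).

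On the right-hand side, setting $c=b$ in the expression $\frac{1}{(n+c)\binom{n+b}{n+c}}\bigl(\frac{n}{n+c}+c(H_{n+c}-H_{b-c})\bigr)$ requires care because $H_{b-c}=H_0=0$ when $c=b$, which is a welcome simplification, and because $\binom{n+b}{n+c}=\binom{n+b}{n+b}=1$. Thus the right side collapses to $\frac{1}{n+b}\bigl(\frac{n}{n+b}+b\,H_{n+b}\bigr)$. The remaining work is purely algebraic: combine this with the converted left-hand side and the Frisch-evaluated constant terms to arrive at the stated closed form $\frac{b}{n+b}H_b-\frac{n}{(n+b)^2}$.

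The step I expect to be the main obstacle is the reconciliation of the harmonic-number arguments: the derivation naturally produces $H_k$ and $H_b$ separately, whereas the corollary is stated with $H_{k+b}$ inside the sum and $H_b$ outside. I would therefore want to track signs and the precise contribution of the $k=0$ term very carefully, since at $k=0$ the summand $\frac{1}{\binom{b}{0}}(H_b-H_b)$ vanishes in the $H_k-H_b$ formulation but does not obviously vanish in the $H_{k+b}$ formulation, so the conversion constant must exactly absorb this discrepancy. Once that accounting is verified against a small case such as $n=1$, the identity should follow without further difficulty.
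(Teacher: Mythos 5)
Your proposal is correct and follows essentially the same route as the paper: set $c=b$ in \eqref{main_id2}, evaluate the constant $H_b$ piece via Frisch's identity specialized to $c=b$ (the paper's \eqref{eq.twi4acl}), and absorb the difference between $H_{k+b}$ and $H_k$ using \eqref{main_id11}. One small caution: the parenthetical ``relation'' $H_{k+b}=H_{k+b-c}-H_c$ is false as written (harmonic numbers are not additive in their arguments), but your actual add-and-subtract decomposition $H_{k+b}=(H_{k+b}-H_k)+(H_k-H_b)+H_b$, with the three pieces evaluated by \eqref{main_id11}, \eqref{main_id2} at $c=b$, and \eqref{eq.twi4acl} respectively, is exactly the paper's argument.
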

\begin{proof}
Set $c=b$ in \eqref{main_id2} to get
\begin{equation*}
\sum_{k=0}^n (-1)^{k+1} \frac{\binom{n}{k}}{\binom{b+k}{b}}\left ( H_{k} - H_{b}\right ) 
= \frac{1}{n+b} \left (\frac{n}{n+b} + b H_{n+b}\right ).
\end{equation*}
Use the fact that
\begin{equation}\label{eq.twi4acl}
\sum_{k=0}^n (-1)^{k} \frac{\binom{n}{k}}{\binom{b+k}{k}} = \frac{b}{n+b}
\end{equation}
and combine with \eqref{main_id11}.
\end{proof}

\begin{corollary}
For $n\in\mathbb N_0$ and $b\in\mathbb C\setminus\mathbb Z^{-}$ we have
\begin{equation}\label{cor_id3}
\sum_{k=1}^n \binom{n}{k} (-1)^{k+1} \frac{k}{(k+b)^2} = \frac{H_{n+b} - H_{b}}{\binom{n+b}{n}}.
\end{equation}
In particular, 
\begin{equation}
\sum_{k=1}^n \binom{n}{k} (-1)^{k+1} \frac{k}{(k+n)^2} = \frac{H_{2n} - H_{n}}{\binom{2n}{n}}.
\end{equation}
\end{corollary}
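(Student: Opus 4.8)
The plan is to reduce the sum to combinations of two standard sums by partial fractions and then recognize the result as a single derivative of a reciprocal binomial coefficient. First I would write $\frac{k}{(k+b)^2} = \frac{1}{k+b} - \frac{b}{(k+b)^2}$ and note that, because of the factor $k$, the summand vanishes at $k=0$, so the range may be extended to start at $k=0$ without changing the value. Set $\phi(b) = \sum_{k=0}^n (-1)^k \binom{n}{k}(k+b)^{-1}$, which is precisely Frisch's identity~\eqref{Id_Frisch} at $c=1$ and simplifies to $\phi(b)=b^{-1}\binom{n+b}{n}^{-1}$. Differentiating termwise (a finite sum) gives $\sum_{k=0}^n (-1)^k \binom{n}{k}(k+b)^{-2} = -\phi'(b)$; this latter sum is exactly the left-hand side of Corollary~\eqref{cor_id1}. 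Substituting the partial fractions and collecting, the target sum becomes
\begin{equation*}
\sum_{k=1}^n \binom{n}{k}(-1)^{k+1}\frac{k}{(k+b)^2} = -\phi(b) - b\,\phi'(b) = -\frac{d}{db}\bigl(b\,\phi(b)\bigr).
\end{equation*}

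The key simplification is that $b\,\phi(b) = \binom{n+b}{n}^{-1}$, so the whole expression collapses to $-\frac{d}{db}\binom{n+b}{n}^{-1}$. I would then differentiate using the digamma function exactly as in~\eqref{eq.lilpf1k} (the $c=0$ case, since the lower entry $n$ is independent of $b$), obtaining $\frac{d}{db}\binom{n+b}{n}^{-1} = \binom{n+b}{n}^{-1}\bigl(\psi(b+1)-\psi(n+b+1)\bigr)$, and finally convert the digamma values to harmonic numbers via the fundamental relation~\eqref{eq.cx26aju}, $\psi(x+1) = H_x - \gamma$. This gives $\psi(n+b+1)-\psi(b+1) = H_{n+b}-H_b$ and hence the claimed right-hand side $\dfrac{H_{n+b}-H_b}{\binom{n+b}{n}}$.

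I expect the only real care to be in the bookkeeping: tracking the signs through the partial fraction step and recognizing the two resulting sums as $\phi$ and $-\phi'$, so that their combination is the exact derivative $-(b\,\phi)'$. There is no genuine analytic obstacle, since all sums are finite and differentiation under the summation sign is immediate. An alternative, avoiding differentiation, is to evaluate the two pieces separately from the closed form $\phi(b) = b^{-1}\binom{n+b}{n}^{-1}$ and from Corollary~\eqref{cor_id1}, then combine the reciprocal binomials through $\binom{n+b}{n}/\binom{n+b}{n+1} = (n+1)/b$ and use the recurrence~\eqref{recurrence} in the form $H_{b-1} + b^{-1} = H_b$ to reach the same answer. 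The ``in particular'' case is then just the specialization $b=n$, giving $\dfrac{H_{2n}-H_n}{\binom{2n}{n}}$.
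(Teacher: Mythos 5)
Your proof is correct, but it takes a genuinely different route from the paper. The paper deduces \eqref{cor_id3} from \eqref{cor_id2}: it inverts the binomial transform there to get
\begin{equation*}
\sum_{k=0}^n \binom{n}{k} (-1)^k \left ( \frac{b}{k+b} H_{b} - \frac{k}{(k+b)^2}\right ) = \frac{H_{n+b}}{\binom{n+b}{n}},
\end{equation*}
and then strips off the $H_b$-term using \eqref{binomial_frac_id}. You instead bypass \eqref{cor_id2} entirely: the partial-fraction split $\frac{k}{(k+b)^2}=\frac{1}{k+b}-\frac{b}{(k+b)^2}$ exhibits the target sum as the exact derivative $-\frac{d}{db}\bigl(b\,\phi(b)\bigr)$ of the $c=1$ Frisch sum $\phi(b)=b^{-1}\binom{n+b}{n}^{-1}$, so everything collapses to $-\frac{d}{db}\binom{n+b}{n}^{-1}$, evaluated via the digamma relation as in \eqref{eq.lilpf1k} and \eqref{eq.cx26aju}. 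Your argument is more self-contained --- it needs only Frisch's identity at $c=1$ plus the differentiation technique the paper already uses elsewhere, rather than the chain \eqref{main_id2} $\to$ \eqref{cor_id2} $\to$ inverse transform --- while the paper's route showcases how its identities cascade through binomial-transform inversion without any new differentiation. One small caveat common to both proofs: the intermediate objects ($\phi(b)$ in yours, \eqref{binomial_frac_id} in the paper's) have a pole at $b=0$, so the case $b=0$ (allowed by the corollary's hypothesis, where \eqref{cor_id3} reduces to the classical $\sum_{k=1}^n\binom{n}{k}\frac{(-1)^{k+1}}{k}=H_n$) should be covered by continuity; in your write-up this is automatic since $b\,\phi(b)=\binom{n+b}{n}^{-1}$ is analytic at $b=0$, which you could note in passing.
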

\begin{proof}
Using the fact that the right hand-side of \eqref{cor_id2} is the binomial transform of $H_{n+b}/\binom{n+b}{n}$ we get
\begin{equation*}
\sum_{k=0}^n \binom{n}{k} (-1)^k \left ( \frac{b}{k+b} H_{b} - \frac{k}{(k+b)^2}\right ) = \frac{H_{n+b}}{\binom{n+b}{n}}. 
\end{equation*}
Combine this with 
\begin{equation}\label{binomial_frac_id}
\sum_{k=0}^n \binom{n}{k} (-1)^k \frac{1}{k+b} = \frac{1}{b \binom{n+b}{b}} 
\end{equation}
and the proof is completed.
\end{proof}

\begin{remark}
Comparing \eqref{cor_id3} with \eqref{cor_id1} we see that
\begin{equation*}
\sum_{k=0}^n \binom{n}{k} (-1)^k \frac{b}{(b+k)^2} = \frac{H_{n+b}-H_{b-1}}{\binom{n+b}{n}}
\end{equation*}
and
\begin{equation*}
\sum_{k=0}^n \binom{n}{k} (-1)^k \frac{k}{(b+k)^2} = \frac{-H_{n+b}+H_{b}}{\binom{n+b}{n}},
\end{equation*}
which upon addition again yield \eqref{binomial_frac_id}.
\end{remark}

\begin{theorem}
If $n\in\mathbb Z^+$ and $b\in\mathbb C\setminus\mathbb Z^{-}$, then
\begin{equation}\label{eq.hzbwh6o}
\sum_{k = 0}^n {( - 1)^{k - 1} \frac{{\binom{{n}}{k}}}{{\binom{{b + k}}{k}}}H_k } 
= \frac{b}{{n + b}}\left( {H_{n + b} - H_b } \right) + \frac{n}{{\left( {n + b} \right)^2 }}.
\end{equation}
\end{theorem}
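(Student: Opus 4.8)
The plan is to obtain the target sum as a simple linear combination of two identities already established in this note, so that no new differentiation or binomial-transform argument is required. Since $(-1)^{k-1} = -(-1)^k$, I first rewrite the left-hand side as $-S$, where $S := \sum_{k=0}^n (-1)^k \binom{n}{k} H_k / \binom{b+k}{k}$. It then suffices to show that $-S$ equals the claimed right-hand side, i.e.\ to isolate the "pure $H_k$" sum $S$.

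First I would invoke identity~\eqref{main_id11}, namely $\sum_{k=0}^n (-1)^k \binom{n}{k}(H_{k+b} - H_k)/\binom{b+k}{k} = \frac{b}{n+b} H_{n+b}$. Splitting the summand across the difference $H_{k+b}-H_k$ gives the relation $T - S = \frac{b}{n+b} H_{n+b}$, where I set $T := \sum_{k=0}^n (-1)^k \binom{n}{k} H_{k+b}/\binom{b+k}{k}$. This reduces the problem to evaluating $T$, since then $S = T - \frac{b}{n+b} H_{n+b}$.

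Next I would feed in Corollary identity~\eqref{cor_id2}, which evaluates exactly this $T$: $T = \frac{b}{n+b} H_b - \frac{n}{(n+b)^2}$. Substituting into $S = T - \frac{b}{n+b} H_{n+b}$ yields $S = \frac{b}{n+b} H_b - \frac{n}{(n+b)^2} - \frac{b}{n+b} H_{n+b}$. Negating and regrouping the two harmonic-number terms produces $-S = \frac{b}{n+b}(H_{n+b} - H_b) + \frac{n}{(n+b)^2}$, which is precisely the assertion.

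There is no genuine analytic obstacle here: once~\eqref{main_id11} and~\eqref{cor_id2} are in hand the whole argument is a two-line bookkeeping step. The only points demanding care are the sign arising from $(-1)^{k-1}$, and keeping straight that~\eqref{cor_id2} addresses the $H_{k+b}$ sum $T$ while $H_k$ (through $S$) is the quantity being isolated. Both cited identities carry the hypotheses $n\in\mathbb Z^+$ and $b\in\mathbb C\setminus\mathbb Z^{-}$, matching the statement verbatim, so no supplementary case analysis is needed.
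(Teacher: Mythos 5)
Your proposal is correct: the algebra checks out, the sign bookkeeping from $(-1)^{k-1}=-(-1)^k$ is handled properly, and there is no circularity, since \eqref{cor_id2} is established in the paper before and independently of \eqref{eq.hzbwh6o}. However, your route differs from the paper's. The paper proves \eqref{eq.hzbwh6o} by setting $c=b$ in \eqref{main_id2} and then using \eqref{eq.twi4acl} to absorb the constant $H_b$ term; it never touches \eqref{main_id11} or \eqref{cor_id2} in this proof. Your argument instead combines \eqref{main_id11} (to relate the $H_k$ sum $S$ to the $H_{k+b}$ sum $T$) with \eqref{cor_id2} (which evaluates $T$). Note that the paper's own proof of \eqref{cor_id2} consists of exactly the ingredients \eqref{main_id2} with $c=b$, \eqref{eq.twi4acl}, and \eqref{main_id11}; so your derivation is essentially the paper's chain of reasoning run in reverse, with the two applications of \eqref{main_id11} cancelling when everything is unwound. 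What your route buys is economy of computation --- a pure two-line linear combination of already-stated results, with no need to specialize parameters in \eqref{main_id2}; what the paper's route buys is independence from the corollaries, deriving the theorem directly from one of the two main parametric identities, which is arguably cleaner from the standpoint of logical architecture (theorems feeding corollaries rather than corollaries feeding theorems).
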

\begin{proof}
Set $c=b$ in~\eqref{main_id2} and use~\eqref{eq.twi4acl}.
\end{proof}

\begin{theorem}
If $n\in\mathbb N_0$ and $b\in\mathbb C\setminus\mathbb Z^{-}$, $b\ne 0$, then
\begin{equation}\label{eq.ufus43q}
\sum_{k = 0}^n \binom{{n}}{k} \frac{(- 1)^k}{(k + b)^3} = \frac{1}{2n + 2}\binom{{n + b}}{{n + 1}}^{- 1} \left( {\left( {H_{n + b}  
- H_{b - 1} } \right)^2 - H_{b - 1}^{(2)} + H_{n + b}^{(2)} } \right).
\end{equation}
In particular,
\begin{equation}\label{Bai_id}
\sum_{k = 0}^n \binom{{n}}{k} \frac{(- 1)^k}{(k + 1)^3} = \frac{1}{2n + 2}\left( {H_{n + 1}^2 + H_{n + 1}^{(2)} } \right).
\end{equation}
\end{theorem}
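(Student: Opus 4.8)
The plan is to obtain \eqref{eq.ufus43q} by differentiating the Corollary identity \eqref{cor_id1} once more with respect to $b$. Since $\frac{d}{db}(b+k)^{-2} = -2(b+k)^{-3}$, differentiating the left-hand side of \eqref{cor_id1} term by term produces exactly $-2$ times the sum we are after, so that
\[
\sum_{k=0}^n (-1)^k \binom{n}{k}\frac{1}{(k+b)^3} = -\frac12\,\frac{d}{db}\left[\frac{1}{n+1}\,\frac{H_{n+b}-H_{b-1}}{\binom{n+b}{n+1}}\right].
\]
Everything then reduces to evaluating the derivative of the right-hand side.

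To carry out this differentiation I would write the right-hand side (up to the constant $1/(n+1)$) as a product of $A(b):=H_{n+b}-H_{b-1}$ and $B(b):=\binom{n+b}{n+1}^{-1}$ and apply the product rule. For the factor $B(b)$ I would reuse \eqref{eq.lilpf1k} with $c=1$, which, after converting the digamma values to harmonic numbers via \eqref{eq.cx26aju}, gives $B'(b) = -(H_{n+b}-H_{b-1})\,B(b) = -A(b)B(b)$. For the factor $A(b)$ I would use $\frac{d}{db}H_{b+x}=\psi'(b+x+1)$ together with the trigamma evaluation $\psi'(z)=\zeta(2)-H_{z-1}^{(2)}$; the two $\zeta(2)$ contributions cancel in the difference $H_{n+b}-H_{b-1}$ and leave $A'(b)=H_{b-1}^{(2)}-H_{n+b}^{(2)}$.

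Combining these, the product rule yields $\frac{d}{db}(AB)=B\bigl[(H_{b-1}^{(2)}-H_{n+b}^{(2)})-A^2\bigr]$, and multiplying by $-\tfrac12\cdot\tfrac1{n+1}$ reproduces \eqref{eq.ufus43q} once we rewrite $A=H_{n+b}-H_{b-1}$. The particular case \eqref{Bai_id} then follows immediately on setting $b=1$, since $H_0=H_0^{(2)}=0$ and $\binom{n+1}{n+1}=1$.

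The only genuinely delicate point is the bookkeeping of the second-order harmonic numbers: one must recognize that differentiating a (generalized) harmonic number with respect to its argument introduces the trigamma function, and that the relation $\psi'(z)=\zeta(2)-H_{z-1}^{(2)}$ is precisely what makes the $\zeta(2)$ terms disappear and produces the combination $H_{n+b}^{(2)}-H_{b-1}^{(2)}$ appearing on the right-hand side. The derivative of $B(b)$ is routine given \eqref{eq.lilpf1k}, so I expect no further obstacle.
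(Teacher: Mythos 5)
Your proposal is correct and follows essentially the same route as the paper: the authors likewise differentiate \eqref{cor_id1} with respect to $b$ (written in digamma form) and convert the resulting trigamma terms to second-order harmonic numbers via $\psi_1(x+1)-\psi_1(y+1)=H_y^{(2)}-H_x^{(2)}$, which is exactly your cancellation of the $\zeta(2)$ contributions. Your bookkeeping of $A'(b)$, $B'(b)$, and the final substitution $b=1$ all check out.
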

\begin{proof}
Write~\eqref{cor_id1} as
\begin{equation*}
\sum_{k=0}^n (-1)^k \frac{\binom{n}{k}}{(b+k)^2} = \frac{1}{n+1} \frac{\psi(n + b + 1)-\psi(b)}{\binom{n + b}{n + 1}}
\end{equation*}
and differentiate with respect to $b$ to obtain
\begin{equation*}
\begin{split}
&\sum_{k = 0}^n {\frac{{( - 1)^k }}{{(k + b)^3 }}\binom{{n}}{k}} \\
&\qquad = \frac{1}{2n + 2} \binom{{n + b}}{{n + 1}}^{ - 1} \left( {\psi (n + b + 1) - \psi (b)} \right)^2 \\
&\qquad\quad - \frac{1}{{2n + 2}}\binom{{n + b}}{{n + 1}}^{ - 1} \left( {\psi _1 (n + b + 1) - \psi _1 (b)} \right),
\end{split}
\end{equation*}
where $\psi _1 (x)$ is the trigamma function defined by
\begin{equation*}
\psi _1 (x) = \frac{d}{{dx}}\psi (x) = \sum_{k = 0}^\infty \frac{1}{(k + x)^2}
\end{equation*}
and $H^{(2)}_r$ is the $r^{\,th}$ second order harmonic number,
\begin{equation*}
H_r^{(2)} = \sum_{j = 1}^r \frac{1}{j^2}.
\end{equation*}
The result follows upon using the fact that 
\begin{equation}\label{eq.kmcmfjf}
\psi \left( {x + 1} \right) - \psi \left( {y + 1} \right) = H_x  - H_y
\end{equation}
and
\begin{equation}\label{eq.ezvh9zc}
\psi_1 \left( {x + 1} \right) - \psi_1 \left( {y + 1} \right) = H_y^{(2)}  - H_x^{(2)},
\end{equation}
for $x,y\in\mathbb C\setminus\mathbb Z^{-}$.
\end{proof}

\begin{theorem}
If $n\in\mathbb N_0$ and $b\in\mathbb C\setminus\mathbb Z^{-}$, then
\begin{gather}
\sum_{k = 0}^n \left( { - 1} \right)^{k - 1} \binom{{n}}{k}H_{k + b}^{(2)} = \frac{{H_{n + b} - H_b }}{{n\binom{{n + b}}{n}}},\quad n\ne 0\label{eq.soe68bj},\\
\sum_{k = 1}^n \left( { - 1} \right)^{k - 1} \binom{{n}}{k}\frac{{H_{k + b} - H_b }}{{k\binom{{k + b}}{k}}} = H_{n + b}^{(2)} - H_b^{(2)}\label{eq.lht3ics},\\
\sum_{k = 1}^n \left( { - 1} \right)^{k - 1} \binom{{n}}{k}\frac{{H_{k + b} }}{{k\binom{{k + b}}{k}}} 
= \left( {H_{n + b} - H_b } \right)H_b + H_{n + b}^{(2)} - H_b^{(2)}\label{eq.qr5oi89} .
\end{gather}
\end{theorem}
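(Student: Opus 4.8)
The plan is to obtain all three identities by differentiating earlier results of this paper with respect to the parameter $b$ and then recombining them algebraically; no new summation technique is required. The crucial analytic input is the derivative rule $\frac{d}{db}H_{k+b}=\psi_1(k+b+1)$, which follows from \eqref{eq.cx26aju}, together with the evaluation $\psi_1(k+b+1)=\psi_1(1)-H_{k+b}^{(2)}$ coming from \eqref{eq.ezvh9zc} on taking $y=0$ (so that $H_y^{(2)}=0$). Since every sum in sight is finite, differentiation under the summation sign is immediate.

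For \eqref{eq.soe68bj} I would differentiate identity \eqref{main_id22} with respect to $b$. On the left each term contributes $(-1)^{k+1}\binom{n}{k}\bigl(\psi_1(1)-H_{k+b}^{(2)}\bigr)$; the constant piece $\psi_1(1)\sum_{k=0}^n(-1)^{k+1}\binom{n}{k}$ vanishes for $n\ge1$ because $\sum_{k=0}^n(-1)^k\binom{n}{k}=0$, leaving exactly $\sum_{k=0}^n(-1)^{k}\binom{n}{k}H_{k+b}^{(2)}$. On the right I would use $\frac{d}{db}\binom{n+b}{n}^{-1}=-\binom{n+b}{n}^{-1}\bigl(H_{n+b}-H_b\bigr)$, which is just \eqref{eq.kmcmfjf} applied to $\frac{d}{db}\log\binom{n+b}{n}=\psi(n+b+1)-\psi(b+1)$. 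Matching the two sides and multiplying by $-1$ yields \eqref{eq.soe68bj}.

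Identity \eqref{eq.lht3ics} comes the same way from \eqref{eq.auhzvb5}. Differentiating $\binom{k+b}{k}^{-1}$ produces the factor $-\bigl(H_{k+b}-H_b\bigr)$, which is precisely what converts the summand of \eqref{eq.auhzvb5} into that of \eqref{eq.lht3ics}, while the right-hand side differentiates to $\psi_1(n+b+1)-\psi_1(b+1)=H_b^{(2)}-H_{n+b}^{(2)}$ by \eqref{eq.ezvh9zc}; a sign flip finishes it. Finally \eqref{eq.qr5oi89} requires no calculus at all: writing $H_{k+b}=\bigl(H_{k+b}-H_b\bigr)+H_b$ splits its left-hand side into the left-hand side of \eqref{eq.lht3ics} plus $H_b$ times the left-hand side of \eqref{eq.auhzvb5}, and substituting the known values $H_{n+b}^{(2)}-H_b^{(2)}$ and $H_{n+b}-H_b$ gives exactly the claimed $\bigl(H_{n+b}-H_b\bigr)H_b+H_{n+b}^{(2)}-H_b^{(2)}$.

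The only real point to watch is the bookkeeping of signs and the vanishing of the constant term, which relies on $\sum_{k=0}^n(-1)^k\binom{n}{k}=0$ and hence needs $n\ge1$; this is consistent with the restriction $n\ne0$ that \eqref{eq.soe68bj} already inherits from the $1/n$ factor in \eqref{main_id22}. By contrast \eqref{eq.lht3ics} and \eqref{eq.qr5oi89}, being sums from $k=1$, hold trivially at $n=0$ as well, where both sides reduce to $0$. Beyond this the argument is entirely routine.
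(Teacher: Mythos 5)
Your proposal is correct, and two of its three steps coincide with the paper's: you obtain \eqref{eq.soe68bj} by differentiating \eqref{main_id22} with respect to $b$ (exactly as the paper does), and you obtain \eqref{eq.qr5oi89} by splitting $H_{k+b}=(H_{k+b}-H_b)+H_b$ and invoking \eqref{eq.lht3ics} together with \eqref{eq.auhzvb5}, which is precisely what the paper means by ``using \eqref{eq.auhzvb5} in \eqref{eq.lht3ics}.'' The one place you diverge is \eqref{eq.lht3ics}: the paper gets it as the inverse binomial transform of \eqref{eq.soe68bj}, whereas you derive it independently by differentiating \eqref{eq.auhzvb5} with respect to $b$, using $\frac{d}{db}\binom{k+b}{k}^{-1}=-\binom{k+b}{k}^{-1}(H_{k+b}-H_b)$ on the left and \eqref{eq.ezvh9zc} on the right. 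Both routes are sound; yours costs one extra derivative computation but buys a proof of \eqref{eq.lht3ics} that does not pass through \eqref{eq.soe68bj} at all, and in particular it sidesteps the small bookkeeping issue in the paper's route, namely that inverting the transform pair requires handling the exceptional $k=0$ term of \eqref{eq.soe68bj} (where the stated form breaks down because of the factor $1/n$) separately as $a_0=H_b^{(2)}$. Your closing remarks on the role of $\sum_{k=0}^n(-1)^k\binom{n}{k}=0$ (hence $n\ge 1$) for \eqref{eq.soe68bj}, and on the trivial validity of the other two identities at $n=0$, are also accurate.
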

\begin{proof}
Differentiate~\eqref{main_id22} with respect to $b$ to get~\eqref{eq.soe68bj}. Identity~\eqref{eq.lht3ics} is the inverse of~\eqref{eq.soe68bj}. Identity~\eqref{eq.qr5oi89} is obtained by using~\eqref{eq.auhzvb5} in~\eqref{eq.lht3ics}.
\end{proof}

\begin{theorem}
If $n\in\mathbb Z^+$ and $b\in\mathbb C\setminus\mathbb Z^{-}$, then
\begin{equation}\label{eq.iauweap}
\begin{split}
\sum_{k = 0}^n {( - 1)^{k - 1} \frac{{\binom{{n}}{k}}}{{\binom{{b + k}}{b}}}H_k H_{k + b} }  
&= \frac{1}{{n + b}}\left( {bH_b - \frac{n}{{n + b}}} \right)\left( {H_{n + b}  - H_b } \right)\\
&\qquad + \frac{b}{{n + b}}\left( {H_{n + b}^{(2)}  - H_b^{(2)} } \right) + \frac{n}{{\left( {n + b} \right)^2 }}\left( {\frac{2}{{n + b}} + H_b } \right).
\end{split}
\end{equation}
\end{theorem}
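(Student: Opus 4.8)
The plan is to produce the product $H_k H_{k+b}$ by differentiating the already-proved identity \eqref{eq.hzbwh6o} with respect to $b$. Write $G(b)$ for the left-hand side of \eqref{eq.hzbwh6o}, namely $G(b)=\sum_{k=0}^n(-1)^{k-1}\binom{n}{k}\binom{b+k}{b}^{-1}H_k$. Since the weights $(-1)^{k-1}\binom{n}{k}H_k$ do not depend on $b$, differentiation acts only on the reciprocal binomial coefficient. Using $\binom{b+k}{b}=\binom{b+k}{k}$ together with \eqref{eq.u72pdmn} (with $c=k$) and the fundamental relation \eqref{eq.cx26aju}, one has $\frac{d}{db}\binom{b+k}{b}^{-1}=\binom{b+k}{b}^{-1}\left(H_b-H_{b+k}\right)$.

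First I would differentiate term by term. This gives $G'(b)=\sum_{k=0}^n(-1)^{k-1}\binom{n}{k}\binom{b+k}{b}^{-1}H_k(H_b-H_{b+k})$, which splits as $G'(b)=H_b\,G(b)-S$, where $S$ denotes exactly the sum on the left of \eqref{eq.iauweap}. Solving for $S$ yields the compact relation $S=H_b\,G(b)-G'(b)$; the whole problem is thereby reduced to evaluating $G'(b)$ from the closed form on the right of \eqref{eq.hzbwh6o}.

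Next I would compute $G'(b)$ by differentiating $\frac{b}{n+b}\left(H_{n+b}-H_b\right)+\frac{n}{(n+b)^2}$ directly. Here $\frac{d}{db}H_{n+b}=\psi_1(n+b+1)$ and $\frac{d}{db}H_b=\psi_1(b+1)$, and \eqref{eq.ezvh9zc} converts the trigamma difference into $H_b^{(2)}-H_{n+b}^{(2)}$, while the rational factors differentiate elementarily. Substituting this $G'(b)$ and the closed form of $G(b)$ into $S=H_b\,G(b)-G'(b)$ leaves a purely algebraic simplification.

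The last step is the only place any real care is needed: one collects the resulting terms into three groups, those proportional to $H_{n+b}-H_b$, those proportional to $H_{n+b}^{(2)}-H_b^{(2)}$, and the remaining rational expression in $n$ and $b$. Matching these against the three summands on the right of \eqref{eq.iauweap} completes the proof. I expect the bookkeeping in this final collection of terms to be the main (and essentially only) obstacle; there is no conceptual difficulty once the correct identity to differentiate has been identified.
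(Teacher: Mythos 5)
Your proposal is correct and follows essentially the same route as the paper: the paper likewise differentiates \eqref{eq.hzbwh6o} with respect to $b$ using \eqref{eq.u72pdmn}, and then substitutes \eqref{eq.hzbwh6o} back in to eliminate the sum $\sum_k(-1)^{k-1}\binom{n}{k}\binom{b+k}{b}^{-1}H_kH_b$ arising on the left --- exactly your relation $S=H_b\,G(b)-G'(b)$. The remaining trigamma conversion via \eqref{eq.ezvh9zc} and the algebraic collection of terms proceed just as you describe and do reproduce the stated right-hand side.
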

\begin{proof}
Write~\eqref{eq.hzbwh6o} as
\begin{equation*}
\sum_{k = 0}^n {( - 1)^{k - 1} \frac{{\binom{{n}}{k}}}{{\binom{{b + k}}{k}}}H_k } 
= \frac{b}{{n + b}}\left( {\psi(n + b + 1) - \psi(b + 1) } \right) + \frac{n}{{\left( {n + b} \right)^2 }}
\end{equation*}
and differentiate with respect to $b$, using~\eqref{eq.u72pdmn}. Use~\eqref{eq.hzbwh6o} again in simplifying the left hand side of the resulting expression.
\end{proof}

In particular, for all positive integers $n$, we have
\begin{equation}\label{id_Hk_squared}
\sum_{k = 0}^n (- 1)^k \binom{n}{k} H_k^2 = \frac{{H_n }}{n} - \frac{2}{n^2}
\end{equation}
and
\begin{align}
&\sum_{k = 0}^n (- 1)^{k - 1} \frac{\binom{n}{k}}{\binom{{n + k}}{k}} H_k H_{k + n} \nonumber \\
&\qquad = \frac{1}{2}\left( H_{2n}^{(2)} - H_n^{(2)} \right) + \frac{1}{2} H_{2n} \left( H_{n} - \frac{1}{2n} \right)
- \frac{1}{2} H_{n} H_{n-1} + \frac{1}{4n^2}.
\end{align}

\begin{corollary}
If $n$ is a positive integer, then
\begin{equation}\label{eq.vsafq0i}
\sum_{k=1}^n (- 1)^{k - 1} \binom{n}{k} \frac{H_k}{k} = H_n^{(2)}.
\end{equation}
\end{corollary}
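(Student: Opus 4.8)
The plan is to recognize \eqref{eq.vsafq0i} as the boundary case $b=0$ of the general identity \eqref{eq.qr5oi89}, which is already at our disposal. Since $0\notin\mathbb Z^-$, the hypothesis $b\in\mathbb C\setminus\mathbb Z^-$ of \eqref{eq.qr5oi89} permits the substitution $b=0$ directly, with no limiting argument required.

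First I would set $b=0$ in \eqref{eq.qr5oi89}. On the left-hand side the generalized binomial coefficient collapses, $\binom{k+0}{k}=\binom{k}{k}=1$, and the shifted harmonic number reduces to $H_{k+0}=H_k$, so the summand becomes exactly $(-1)^{k-1}\binom{n}{k}H_k/k$, matching the left-hand side of \eqref{eq.vsafq0i}. I would then simplify the right-hand side of \eqref{eq.qr5oi89} at $b=0$: using the initial values $H_0=0$ and $H_0^{(2)}=0$ (the latter by the empty-sum convention), the term $(H_{n+b}-H_b)H_b$ vanishes because of the factor $H_b=H_0=0$, while $H_{n+b}^{(2)}-H_b^{(2)}$ reduces to $H_n^{(2)}$. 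This yields precisely $H_n^{(2)}$, completing the identification.

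There is essentially no obstacle here beyond confirming that $b=0$ is an admissible value, which it is; the result is an immediate specialization once \eqref{eq.qr5oi89} is in hand. For a sanity check I would verify the cases $n=1$ and $n=2$, where both sides evaluate to $1$ and $\tfrac54$ respectively. One could alternatively give a self-contained derivation by the finite-difference manipulation used earlier in the note for $\sum_k (-1)^k\binom{n}{k}/(k+1)^2$, but routing through \eqref{eq.qr5oi89} is by far the shortest path.
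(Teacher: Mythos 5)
Your proof is correct, but it follows a different route from the paper's own proof. The paper proves \eqref{eq.vsafq0i} by treating \eqref{id_Hk_squared} as the binomial transform of the sequence $H_n^2$ and inverting it, which gives
\begin{equation*}
\sum_{k=1}^n (-1)^{k} \binom{n}{k} \frac{H_k}{k} + 2 \sum_{k=1}^n \binom{n}{k} \frac{(-1)^{k-1}}{k^2} = H_n^{2},
\end{equation*}
and then finishing with Boyadzhiev's evaluation $\sum_{k=1}^n \binom{n}{k}(-1)^{k-1}/k^2 = \tfrac{1}{2}\left(H_n^2+H_n^{(2)}\right)$, an identity imported from outside the paper. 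Your route --- setting $b=0$ in \eqref{eq.qr5oi89} --- is sound: $0\notin\mathbb Z^{-}$ so the hypothesis is satisfied, no limiting argument is needed (unlike the specialization of \eqref{main_id11b}, where $b$ sits in a denominator), the left-hand side collapses exactly as you describe, and the right-hand side reduces to $H_n^{(2)}$ because $H_0=H_0^{(2)}=0$. There is also no circularity: \eqref{eq.qr5oi89} descends from \eqref{main_id22} via \eqref{eq.soe68bj}, \eqref{eq.lht3ics} and \eqref{eq.auhzvb5}, none of which relies on \eqref{eq.vsafq0i}. As for what each approach buys: yours is shorter and stays entirely within results already established in the paper; the paper's showcases the inverse-binomial-transform technique and ties the corollary to a known identity from Boyadzhiev's book. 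Note that the paper itself acknowledges your route in the remark immediately following the corollary, which states that \eqref{eq.vsafq0i} can also be obtained by simply setting $b=0$ in \eqref{eq.lht3ics} or \eqref{eq.qr5oi89}, so your proposal essentially makes precise the alternative the authors mention but do not carry out.
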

\begin{proof}
Treat identity \eqref{id_Hk_squared} as the binomial transform of the sequence $H_n^2$. The inverse relation yields
\begin{equation*}
\sum_{k=1}^n (- 1)^{k} \binom{n}{k} \frac{H_k}{k} + 2 \sum_{k=1}^n \binom{n}{k} \frac{(-1)^{k-1}}{k^2} = H_n^{2}.
\end{equation*}
But (see Boyadzhiev's book \cite[p. 64]{Boyadzhiev})  
\begin{equation*}
\sum_{k=1}^n \binom{n}{k} \frac{(-1)^{k-1}}{k^2} = \frac{1}{2}\left ( H_n^2 + H_n^{(2)}\right )
\end{equation*}
and the proof is completed.
\end{proof}
\begin{remark}
Identity~\eqref{eq.vsafq0i} can also be obtained by taking the limit of~\eqref{main_id11b} as $b$ approaches zero as well as simply setting $b=0$ in~\eqref{eq.lht3ics} or~\eqref{eq.qr5oi89}.
\end{remark}

\section{Some identities involving odd harmonic numbers}

This section contains some combinatorial identities involving odd harmonic numbers $O_n$, which are defined by
\begin{equation*}
O_n = \sum_{k = 1}^n \frac{1}{2k-1},\quad O_0=0.
\end{equation*}
Obvious relations between harmonic numbers $H_n$ and odd harmonic numbers $O_n$ are given by
\begin{equation}\label{eq.h9wjxs0}
H_{2n} = \frac{1}{2} H_n + O_n \qquad \text{and} \qquad H_{2n - 1} = \frac{1}{2}H_{n - 1} + O_n.
\end{equation}
Additional relations are contained in the next lemma. 

\begin{lemma}\label{lem.czxfdu7}
If $n$ is an integer, then
\begin{gather}
H_{n - 1/2} =2O_n - 2\ln 2,\\
H_{n - 1/2} - H_{ - 1/2} = 2O_n,\label{eq.plh634k} \\
H_{n - 1/2} - H_{1/2} = 2\left( {O_n  - 1} \right),\\
H_{n + 1/2} - H_{ - 1/2} = 2O_{n + 1},\label{eq.ivi1ex5} \\
H_{n + 1/2} - H_{1/2} = 2\left( {O_{n + 1}  - 1} \right),\\
H_{n + 1/2} - H_{n - 1/2} = \frac{2}{{2n + 1}},\\
H_{n - 1/2} - H_{-3/2} = 2\left( {O_n  - 1} \right)\label{eq.pobmr6h},\\
H_{n + 1/2} - H_{-3/2} = 2\left( {O_{n + 1}  - 1} \right).
\end{gather}
\end{lemma}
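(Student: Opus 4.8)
The plan is to treat the defining recurrence~\eqref{recurrence}, $H_x - H_{x-1} = 1/x$, as the sole engine and to anchor everything with one half-integer ``seed'' value. The single non-elementary ingredient I would record at the outset is
\[
H_{-1/2} = \psi(1/2) + \gamma = -2\ln 2,
\]
which follows from~\eqref{eq.cx26aju} together with the classical evaluation $\psi(1/2) = -\gamma - 2\ln 2$ (obtainable, for instance, from the series~\eqref{psi_expression}). Every other half-integer harmonic number appearing in the lemma will be expressed relative to $H_{-1/2}$, so that all eight identities reduce to bookkeeping with the recurrence.

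First I would telescope the recurrence from $x=-1/2$ up to $x=n-1/2$, which gives
\[
H_{n-1/2} = H_{-1/2} + \sum_{j=1}^n \frac{1}{j-1/2} = H_{-1/2} + \sum_{j=1}^n \frac{2}{2j-1} = H_{-1/2} + 2O_n .
\]
This is precisely~\eqref{eq.plh634k}, and substituting $H_{-1/2}=-2\ln 2$ yields the first identity $H_{n-1/2}=2O_n-2\ln 2$. Replacing $n$ by $n+1$ and using $H_{n+1/2}=H_{(n+1)-1/2}$ together with the reindexing $2O_{n+1}=\sum_{j=0}^n 2/(2j+1)$ then gives~\eqref{eq.ivi1ex5}, namely $H_{n+1/2}-H_{-1/2}=2O_{n+1}$. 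Thus the two ``master'' identities are in hand after a single telescoping argument each.

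The remaining identities follow by computing a few seed values from single applications of the recurrence and subtracting. A direct step at $x=1/2$ gives $H_{1/2}=H_{-1/2}+2=2-2\ln 2$, while the downward step $H_{-1/2}=H_{-3/2}+1/(-1/2)$ gives $H_{-3/2}=H_{-1/2}+2=2-2\ln 2$; in particular $H_{-3/2}=H_{1/2}$, so the last two displayed identities collapse onto the third and fifth. Subtracting $H_{1/2}$ from the two master identities produces $H_{n-1/2}-H_{1/2}=2(O_n-1)$ and $H_{n+1/2}-H_{1/2}=2(O_{n+1}-1)$, and hence (via $H_{-3/2}=H_{1/2}$) also~\eqref{eq.pobmr6h} and its $H_{n+1/2}$ counterpart, while the difference identity $H_{n+1/2}-H_{n-1/2}=2/(2n+1)$ is just the recurrence~\eqref{recurrence} evaluated at $x=n+1/2$.

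The only genuine obstacle is the seed value $H_{-1/2}=-2\ln 2$; once it is recorded, the entire lemma is routine. I would also flag that the $O_n$ telescoping is valid for $n\ge 0$, with the $n=0$ case being the empty sum and $O_0=0$, matching the intended range of the statement.
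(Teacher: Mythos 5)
Your proof is correct, and it takes a somewhat different route than the paper. The paper's own proof is shorter but less self-contained: it takes \eqref{eq.cx26aju} as the definition of $H_x$ and then simply quotes the known half-integer digamma formula $\psi(n+1/2) = -\gamma - 2\ln 2 + 2\sum_{k=1}^n \frac{1}{2k-1}$ from Srivastava and Choi, from which $H_{n-1/2} = 2O_n - 2\ln 2$ is immediate and the remaining identities follow by subtraction. You instead take only the single classical value $H_{-1/2} = \psi(1/2) + \gamma = -2\ln 2$ as given and reconstruct that quoted formula yourself by telescoping the recurrence \eqref{recurrence} from $x=-1/2$ up to $x=n-1/2$; in effect you prove the auxiliary fact that the paper cites. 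What your route buys is self-containedness (one constant, obtainable from \eqref{psi_expression}, instead of a formula imported from a reference) plus two small bonuses the paper's proof leaves implicit: the observation that $H_{-3/2} = H_{1/2} = 2 - 2\ln 2$, which collapses the seventh and eighth identities onto the third and fifth, and the recognition that the sixth identity is nothing but the recurrence evaluated at $x = n + 1/2$. What the paper's route buys is brevity. Both arguments carry the same implicit restriction, which you correctly flag: the telescoping (equivalently, the cited digamma formula) is valid for $n \ge 0$, so the lemma's hypothesis that $n$ is an integer should be read as $n$ being a non-negative integer under either treatment.
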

\begin{proof}
Use~\eqref{eq.cx26aju} as the definition of the harmonic numbers for all complex $n$ (excluding zero and the negative integers) and use the known result for the digamma function at half-integer arguments \cite[Eq. (51)]{Srivastava}, namely,
\begin{equation*}
\psi (n + 1/2) = - \gamma - 2\ln 2 + 2\sum_{k = 1}^n \frac{1}{2k - 1}.
\end{equation*}
\end{proof}

\begin{lemma}[{Gould~\cite[Identities Z.45 and Z.51]{Gould}}]
If $r$ and $s$ are integers such that $0\le s\le r$, then
\begin{equation}\label{eq.cwrdtmu}
\binom{{r + 1/2}}{s} = \binom{{2r + 1}}{{2s}}\binom{{r}}{s}^{ - 1} 2^{ - 2s} \binom{{2s}}{s}
\end{equation}
and
\begin{equation}\label{eq.s0t6h30}
\binom{{r - 1/2}}{s} = \binom{{r}}{s}\binom{{2r - 2s}}{{r - s}}^{ - 1} 2^{ - 2s} \binom{{2r}}{r}.
\end{equation}
We also have
\begin{gather}
\binom{{r}}{{1/2}} = \dfrac{{2^{2r + 1} }}{{\pi \binom{{2r}}{r}}},\quad\text{\cite[Identity Z.48]{Gould}}\label{eq.dtwjzd8},\\
\binom{{r}}{{ - 1/2}} = \dfrac{{2^{2r + 1} }}{{\pi (2r + 1)\binom{{2r}}{r}}}\label{eq.q3uie78},
\end{gather}
and
\begin{equation}\label{eq.cm3vohl}
\binom{{r - 1/2}}{{r + 1}} =  - \frac{1}{{r + 1}}\binom{{2r}}{r}\frac{1}{{2^{2r + 1} }}.
\end{equation}
\end{lemma}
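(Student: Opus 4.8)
The plan is to treat every identity in the lemma through the single unifying tool already introduced in the Preliminaries, namely the Gamma-function definition $\binom{r}{s}=\Gamma(r+1)/(\Gamma(s+1)\Gamma(r-s+1))$, together with the classical values of the Gamma function at half-integer arguments. The three facts I would record at the outset are
\[
\Gamma(1/2)=\sqrt{\pi},\qquad \Gamma(-1/2)=-2\sqrt{\pi},\qquad \Gamma\!\left(m+\tfrac12\right)=\frac{(2m)!}{4^m\,m!}\sqrt{\pi}\quad(m\in\mathbb N_0),
\]
the last being the standard Legendre/duplication consequence. Once these are in hand, each displayed identity collapses to an elementary cancellation of factorials.

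For the two Gould identities \eqref{eq.cwrdtmu} and \eqref{eq.s0t6h30} I would simply invoke Z.45 and Z.51; alternatively, writing $\Gamma(r+3/2)$ and $\Gamma(r-s+3/2)$ in \eqref{eq.cwrdtmu} (and the analogous half-integer Gammas in \eqref{eq.s0t6h30}) by the half-integer formula reduces each to an identity between ordinary factorials that one verifies by comparing the two sides termwise. Identity \eqref{eq.dtwjzd8} is Gould's Z.48, and it also falls straight out of the half-integer formula: $\binom{r}{1/2}=\Gamma(r+1)/(\Gamma(3/2)\Gamma(r+1/2))$ becomes $2\cdot 4^{r}(r!)^2/(\pi(2r)!)=2^{2r+1}/(\pi\binom{2r}{r})$ after substituting $\Gamma(3/2)=\tfrac12\sqrt{\pi}$ and $\Gamma(r+1/2)$.

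The cleanest route to \eqref{eq.q3uie78} is not to recompute from scratch but to lean on \eqref{eq.dtwjzd8}: the elementary recurrence $\binom{r}{s-1}=\frac{s}{r-s+1}\binom{r}{s}$, immediate from the Gamma definition, gives at $s=1/2$ the relation $\binom{r}{-1/2}=\frac{1}{2r+1}\binom{r}{1/2}$, whence \eqref{eq.q3uie78} follows at once. For \eqref{eq.cm3vohl} I would compute directly: $\binom{r-1/2}{r+1}=\Gamma(r+1/2)/(\Gamma(r+2)\Gamma(-1/2))$, and inserting $\Gamma(r+1/2)$, $\Gamma(r+2)=(r+1)!$ and $\Gamma(-1/2)=-2\sqrt{\pi}$ yields $-\binom{2r}{r}/((r+1)2^{2r+1})$ once the $\sqrt{\pi}$'s cancel.

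I expect the only genuine subtlety to be the bookkeeping at the negative half-integer argument in \eqref{eq.cm3vohl}: the minus sign in the result is produced entirely by $\Gamma(-1/2)=-2\sqrt{\pi}$, so one must invoke the analytic continuation of $\Gamma$ rather than any factorial formula valid only for nonnegative arguments. In particular one cannot obtain \eqref{eq.cm3vohl} by formally setting $s=r+1$ in \eqref{eq.s0t6h30}, since that substitution produces an indeterminate $0\cdot\infty$ (a vanishing $\binom{r}{r+1}$ against a singular $\binom{-2}{-1}^{-1}$); the direct Gamma evaluation sidesteps this and is the step I would be most careful to write out in full.
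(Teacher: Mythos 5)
Your proposal is correct and is essentially the paper's own argument: the paper likewise leans on Gould for \eqref{eq.cwrdtmu}, \eqref{eq.s0t6h30} and \eqref{eq.dtwjzd8}, and establishes \eqref{eq.q3uie78} and \eqref{eq.cm3vohl} by writing each binomial coefficient in factorial (Gamma) form and evaluating at half-integer arguments, with the minus sign in \eqref{eq.cm3vohl} produced by the value of the Gamma function at $-1/2$, exactly as you describe. Your one small deviation---deducing \eqref{eq.q3uie78} from \eqref{eq.dtwjzd8} via the recurrence $\binom{r}{s-1}=\tfrac{s}{r-s+1}\binom{r}{s}$ rather than evaluating $r!/\bigl((-1/2)!\,(r+1/2)!\bigr)$ directly, as the paper does---is a cosmetic variant of the same Gamma-function computation.
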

\begin{proof}
Identity~\eqref{eq.q3uie78} follows from the fact that
\begin{equation*}
\binom{{r}}{{ - 1/2}} = \frac{{r!}}{{( - 1/2)!(r + 1/2)!}},
\end{equation*}
while~\eqref{eq.cm3vohl} is a consequence of
\begin{equation}
\binom{{r - 1/2}}{{r + 1}} = - \frac{1}{2}\frac{{(r - 1/2)!}}{{(r + 1)!\sqrt \pi }}.
\end{equation}
\end{proof}

\begin{theorem}\label{main_odd1}
If $n$ is a non-negative integer, then
\begin{gather}
\sum_{k = 0}^n \binom{n}{k} \frac{(- 1)^k}{\left( {2k + 1} \right)^2} 
= \frac{2^{2n + 1}}{n+1} \frac{O_{n+1}}{\binom{2(n + 1)}{n + 1}}, \label{eq.main_odd1_1} \\
\sum_{k = 0}^n \binom{n}{k} (- 1)^k \frac{2^{2k + 1}}{k+1} \frac{O_{k+1}}{\binom{2(k + 1)}{k + 1}} = \frac{1}{(2n+1)^2},\\
\sum_{k = 0}^n \binom{{n}}{k} \frac{(- 1)^{k - 1}}{\left( {2k - 1} \right)^2} = 2^{2n} \frac{{O_n - 1}}{\binom{{2n}}{n}} \label{eq.cqklzqh},
\end{gather}
and
\begin{equation}\label{eq.fhj2wwz}
\sum_{k = 0}^n \binom{{n}}{k}(- 1)^{k - 1} 2^{2k} \frac{O_k}{\binom{{2k}}{k}} = \frac{2n}{(2n - 1)^2}.
\end{equation}
\end{theorem}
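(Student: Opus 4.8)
The plan is to derive all four identities by specializing the complex-parameter corollary~\eqref{cor_id1} and the auxiliary identity~\eqref{eq.twi4acl} to the half-integer values $b=1/2$ and $b=-1/2$, converting the resulting half-integer harmonic numbers $H_{n\pm1/2}$ into odd harmonic numbers by Lemma~\ref{lem.czxfdu7} and the half-integer binomial coefficients by the preceding Gould lemma, and finally reading off the remaining two identities by signed binomial inversion.

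First, for~\eqref{eq.main_odd1_1} I would set $b=1/2$ in~\eqref{cor_id1}. Writing $(2k+1)^2=4(k+1/2)^2$ turns the left side into $\tfrac14\sum_{k}(-1)^k\binom{n}{k}/(k+1/2)^2$. On the right side I replace $H_{n+1/2}-H_{-1/2}$ by $2O_{n+1}$ via~\eqref{eq.ivi1ex5}, and evaluate $\binom{n+1/2}{n+1}=2^{-2(n+1)}\binom{2(n+1)}{n+1}$, which is~\eqref{eq.s0t6h30} taken at $r=s=n+1$. Collecting the powers of $2$ yields~\eqref{eq.main_odd1_1}. The third identity~\eqref{eq.cqklzqh} follows in exactly the same manner with $b=-1/2$: now $(2k-1)^2=4(k-1/2)^2$, the factor $H_{n-1/2}-H_{-3/2}$ becomes $2(O_n-1)$ by~\eqref{eq.pobmr6h}, and $\binom{n-1/2}{n+1}=-\tfrac{1}{(n+1)2^{2n+1}}\binom{2n}{n}$ by~\eqref{eq.cm3vohl} with $r=n$. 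After simplification and an overall sign this gives~\eqref{eq.cqklzqh}.

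The remaining two identities are the signed binomial inverses of the two just proved, using the involution taking $b_n=\sum_k(-1)^k\binom nk a_k$ to $a_n=\sum_k(-1)^k\binom nk b_k$. Inverting~\eqref{eq.main_odd1_1} immediately produces the second identity, since its left side is precisely $\sum_k(-1)^k\binom nk/(2k+1)^2$. Inverting~\eqref{eq.cqklzqh} gives $\frac{1}{(2n-1)^2}=\sum_{k=0}^n(-1)^k\binom nk 2^{2k}\tfrac{1-O_k}{\binom{2k}{k}}$; I would then split off the constant part and evaluate $\sum_{k=0}^n(-1)^k\binom nk 2^{2k}/\binom{2k}{k}=-\tfrac{1}{2n-1}$, which is just~\eqref{eq.twi4acl} at $b=-1/2$ once one notes $\binom{k-1/2}{k}=2^{-2k}\binom{2k}{k}$ (again~\eqref{eq.s0t6h30}). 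Combining the two fractions $-\tfrac1{2n-1}-\tfrac1{(2n-1)^2}=-\tfrac{2n}{(2n-1)^2}$ and negating then produces~\eqref{eq.fhj2wwz}.

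The computations are elementary once this dictionary is in place, so the main obstacle I anticipate is purely bookkeeping: matching each of $\binom{n\pm1/2}{n+1}$ to a central binomial coefficient with the correct power of $2$ and the correct sign, and tracking the $(-1)^{k}$ versus $(-1)^{k-1}$ conventions carefully through both inversions so that the shift $O_n-1$ and the free term $\tfrac{2n}{(2n-1)^2}$ emerge with the right sign.
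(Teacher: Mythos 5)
Your proposal is correct and follows essentially the same route as the paper's own proof: specialize \eqref{cor_id1} at $b=\pm 1/2$, translate the half-integer harmonic numbers and binomial coefficients via Lemma~\ref{lem.czxfdu7} and the Gould lemma, and obtain the remaining two identities by signed binomial inversion. The only (minor, favorable) differences are that you actually derive the auxiliary evaluation \eqref{eq.xyz1} from \eqref{eq.twi4acl} at $b=-1/2$ together with $\binom{k-1/2}{k}=2^{-2k}\binom{2k}{k}$, which the paper merely asserts, and you invoke \eqref{eq.s0t6h30} at $r=s=n+1$ for $\binom{n+1/2}{n+1}$, which is the cleaner citation than the paper's appeal to \eqref{eq.cwrdtmu}.
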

\begin{proof}
The first identity is obtained by setting $b=1/2$ in~\eqref{cor_id1} and using~\eqref{eq.ivi1ex5} and~\eqref{eq.cwrdtmu}; 
while the second is its inverse transform. Identity~\eqref{eq.cqklzqh} is obtained by setting $b=-1/2$ in~\eqref{cor_id1} 
and using~\eqref{eq.pobmr6h} and~\eqref{eq.cm3vohl}. To prove identity~\eqref{eq.fhj2wwz} use the inverse binomial transform 
of \eqref{eq.cqklzqh} to get
\begin{equation*}
\sum_{k = 0}^n \binom{{n}}{k}(- 1)^{k - 1} 2^{2k} \frac{O_k}{\binom{{2k}}{k}} 
- \sum_{k = 0}^n \binom{{n}}{k}(- 1)^{k - 1} \frac{2^{2k}}{\binom{{2k}}{k}} = \frac{1}{(2n-1)^2}.
\end{equation*}
But
\begin{equation}\label{eq.xyz1}
\sum_{k = 0}^n \binom{{n}}{k}(- 1)^{k - 1} \frac{2^{2k}}{\binom{{2k}}{k}} = \frac{1}{2n-1}
\end{equation}
and the proof of ~\eqref{eq.fhj2wwz} is completed.
\end{proof}

\begin{theorem}\label{main_odd2}
If $n$ is a non-negative integer, then
\begin{gather}
\sum_{k = 0}^n {( - 1)^{k + 1} \binom{{n}}{k}\frac{{2k - 1}}{{2^{2(k - 1)} }}\binom{{2(k - 1)}}{{k - 1}}O_k }  = \frac{1}{{2^{2(n - 1)} }}\binom{{2(n - 1)}}{{n - 1}}\left( {\frac{{2n}}{{2n - 1}} - O_n } \right)\label{eq.nh1vubi},\\
\sum_{k = 0}^n {( - 1)^{k + 1} \binom{{n}}{k}\frac{{2k + 1}}{{2^{2k} }}\binom{{2k}}{k}O_{k + 1} }  = \frac{1}{{2n - 1}}\frac{{\binom{{2n}}{n}}}{{2^{2n} }}\left( {\frac{{4n - 1}}{{2n - 1}} - O_n } \right)\label{eq.nzxpc5c},\\
\sum_{k = 0}^n {( - 1)^{k + 1} \binom{{n}}{k}2^{ - 2k - 2} \binom{{2(k + 1)}}{{k + 1}}(O_{k + 1} - 1) }  
= \frac{2^{ - 2n - 2}}{{2n + 1}} \binom{{2(n + 1)}}{{n + 1}}\left( {O_{n + 1} -\frac{1}{{2n + 1}}} \right)\label{eq.pj6av0a}, \\
\sum_{k = 0}^n {( - 1)^{k + 1} \binom{{n}}{k}2^{ - 2k} \binom{{2k}}{k}(O_k - 1) }  
= 2^{ - 2n} \binom{{2n}}{n}\left( {\frac{{2n}}{{2n + 1}} + O_{n + 1}} \right)\label{eq.zxdl17v}.
\end{gather}
\end{theorem}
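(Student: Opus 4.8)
The plan is to obtain all four identities by the mechanism already used for Theorem~\ref{main_odd1}, but applied to the two-parameter identity \eqref{main_id2} rather than to the one-parameter \eqref{cor_id1}. For integer $b\in\{0,1\}$ and half-integer $c\in\{-1/2,1/2\}$, the summand binomial $\binom{b+k}{c}$ and the right-hand binomial $\binom{n+b}{n+c}$ of \eqref{main_id2} are both half-integer binomials, and I would convert them to central binomials via the Gould evaluations \eqref{eq.dtwjzd8} and \eqref{eq.q3uie78} (occasionally \eqref{eq.cwrdtmu}, \eqref{eq.s0t6h30}); simultaneously I would rewrite every half-integer harmonic number appearing in the differences $H_{k+b-c}-H_{c}$ and $H_{n+c}-H_{b-c}$ in terms of $O_n$ using Lemma~\ref{lem.czxfdu7} (relations \eqref{eq.plh634k}, \eqref{eq.ivi1ex5}, \eqref{eq.pobmr6h} and their $H_{1/2}$ counterparts).

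Concretely, I would make the assignment as follows. Setting $(b,c)=(0,-1/2)$ turns $H_{k+b-c}-H_{c}=H_{k+1/2}-H_{-1/2}=2O_{k+1}$ and produces \eqref{eq.nzxpc5c}; setting $(b,c)=(0,1/2)$ gives $H_{k-1/2}-H_{1/2}=2(O_k-1)$ and produces \eqref{eq.zxdl17v}; setting $(b,c)=(1,1/2)$ gives $2(O_{k+1}-1)$ and produces \eqref{eq.pj6av0a}. In each of these, the summand's lower index and the right-hand lower index are the two complementary half-integers $-1/2$ and $1/2$, so one binomial is resolved by \eqref{eq.dtwjzd8} and the other by \eqref{eq.q3uie78}, each contributing a single factor of $\pi$. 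Identity \eqref{eq.nh1vubi} is the remaining member: it carries the ``bare'' $O_k$ (hence is a $c=-1/2$ specialization, where $H_{-1/2}$ is the subtracted constant and no $-1$ appears), and I would read it off from the $c=-1/2$ computation after shifting the summation index, reconciling $\binom{2k}{k}$ with $\binom{2(k-1)}{k-1}$ through the elementary ratio $\binom{2k}{k}=\tfrac{2(2k-1)}{k}\binom{2(k-1)}{k-1}$.

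I expect the main obstacle to be the control of the two transcendental constants that the half-integer evaluations drag in. The $\pi$ arises from \eqref{eq.dtwjzd8}/\eqref{eq.q3uie78}, and it cancels between the two sides precisely because the summand and the right-hand binomial carry the complementary lower indices $\mp1/2$, so each side acquires exactly one $\pi$; verifying this pairing is where care is needed. The $\ln 2$ comes from $\psi$ at half-integer arguments (Lemma~\ref{lem.czxfdu7}), and it cancels automatically because \eqref{main_id2} presents every half-integer harmonic number inside a \emph{difference}, $H_{k+b-c}-H_{c}$ and $H_{n+c}-H_{b-c}$, in which the constant $-2\ln 2$ drops out. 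The secondary obstacle is purely combinatorial: the shifted central binomials $\binom{2(k-1)}{k-1}$ and $\binom{2(k+1)}{k+1}$ in \eqref{eq.nh1vubi} and \eqref{eq.pj6av0a} force a careful re-indexing $k\mapsto k\pm1$, and one must check the $k=0$ boundary term (harmless here since $O_0=0$ annihilates the would-be singular contribution) so that the manipulation is legitimate throughout.
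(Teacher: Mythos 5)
Your overall strategy is exactly the paper's: specialize \eqref{main_id2} at half-integer parameters, convert the resulting half-integer binomial coefficients via \eqref{eq.dtwjzd8} and \eqref{eq.q3uie78} (together with the symmetry $\binom{r}{s}=\binom{r}{r-s}$), and convert the harmonic-number differences via Lemma~\ref{lem.czxfdu7}. Your assignments $(b,c)=(0,-1/2)$ for \eqref{eq.nzxpc5c}, $(b,c)=(0,1/2)$ for \eqref{eq.zxdl17v}, and $(b,c)=(1,1/2)$ for \eqref{eq.pj6av0a} are all correct, and the first of these is precisely the paper's choice. However, your treatment of \eqref{eq.nh1vubi} has a genuine gap. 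You propose to ``read it off from the $c=-1/2$ computation after shifting the summation index,'' i.e., to obtain it from \eqref{eq.nzxpc5c} by re-indexing. That manipulation is not legitimate: shifting $k$ carries the weight $\binom{n}{k}$ into $\binom{n}{k\pm1}$, which cannot be reconciled with $\binom{n}{k}$, so \eqref{eq.nh1vubi} and \eqref{eq.nzxpc5c} are simply not related by an index shift; indeed, writing $O_{k+1}=O_k+\frac{1}{2k+1}$ shows that their difference involves two further alternating sums ($\sum_k(-1)^{k+1}\binom{n}{k}2^{-2k}\binom{2k}{k}O_k$ and $\sum_k(-1)^{k+1}\binom{n}{k}2^{-2k}\binom{2k}{k}$) that your argument never evaluates. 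The correct move --- and the paper's --- is a fourth, direct specialization: set $b=-1$, $c=-1/2$ in \eqref{main_id2}. Then $H_{k+b-c}-H_c=H_{k-1/2}-H_{-1/2}=2O_k$ by \eqref{eq.plh634k}, and the binomials $\binom{k-1}{-1/2}$ and $\binom{n-1}{n-1/2}=\binom{n-1}{-1/2}$ are both handled by \eqref{eq.q3uie78} with $r=k-1$ and $r=n-1$, which produces the shifted central binomials $\binom{2(k-1)}{k-1}$ and $\binom{2(n-1)}{n-1}$ directly; no re-indexing and no ratio $\binom{2k}{k}=\frac{2(2k-1)}{k}\binom{2(k-1)}{k-1}$ is needed. (Strictly speaking, $b=-1$ lies outside the hypothesis $b\in\mathbb C\setminus\mathbb Z^{-}$ of Theorem~\ref{thm2}, so this specialization should be justified by continuity in $b$, the $k=0$ term being harmless since $O_0=0$; the paper glosses over this point as well.)

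Two minor inaccuracies, neither fatal. First, your claim that in each case the summand binomial and the right-hand binomial carry the \emph{complementary} lower indices $-1/2$ and $+1/2$ fails for $(b,c)=(1,1/2)$: there $\binom{k+1}{1/2}$ and $\binom{n+1}{n+1/2}=\binom{n+1}{1/2}$ both have lower index $+1/2$ (and in the paper's $(b,c)=(-1,-1/2)$ case both are $-1/2$). The conclusion you actually need --- that each side acquires exactly one factor of $\pi$, so that $\pi$ cancels --- still holds, but the pairing is not always complementary. Second, your observation that the $\ln 2$ terms cancel because every half-integer harmonic number occurs inside a difference is correct and is indeed why Lemma~\ref{lem.czxfdu7} is stated in terms of differences.
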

\begin{proof}
These results follow from \eqref{main_id2}. To obtain~\eqref{eq.nh1vubi}, set $b=-1$, $c=-1/2$ in~\eqref{main_id2} and use~\eqref{eq.plh634k} and~\eqref{eq.q3uie78}. Identity~\eqref{eq.nzxpc5c} comes from setting $b=0$, $c=-1/2$ in~\eqref{main_id2} and using~\eqref{eq.ivi1ex5},~\eqref{eq.dtwjzd8} and~\eqref{eq.q3uie78}.
\end{proof}

\begin{theorem}\label{main_odd3}
If $n$ is a positive integer, then
\begin{gather}
\sum_{k = 1}^n \binom{{n}}{k} ( - 1)^{k + 1} O_k = \frac{{2^{2n - 1} }}{{n \binom{{2n}}{n}}} \label{eq.jm6rck7},\\
\sum_{k = 1}^n \binom{{n}}{k} ( - 1)^{k + 1} \frac{2^{2k-1}}{k \binom{{2k}}{k}} = O_n \label{eq.u1t9s6r},\\
\sum_{k = 0}^n \binom{{n}}{k} ( - 1)^{k + 1} O_{k + 1}  = \frac{{2^{2n - 1} }}{{n(2n + 1)\binom{{2n}}{n}}} \label{eq.fqnumdk},\\
\sum_{k = 1}^n \binom{{n}}{k} ( - 1)^{k + 1} \frac{{2^{2k - 1} }}{k(2k + 1) \binom{{2k}}{k}} = O_{n + 1} - 1  \label{eq.b357dyz}.
\end{gather}
\end{theorem}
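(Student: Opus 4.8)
The plan is to derive the four identities in two pairs: \eqref{eq.jm6rck7} and \eqref{eq.fqnumdk} directly from the master identity \eqref{main_id22} by specializing $b$ to a half-integer, and \eqref{eq.u1t9s6r} and \eqref{eq.b357dyz} as their inverse binomial transforms.

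For \eqref{eq.jm6rck7}, I would set $b=-1/2$ in \eqref{main_id22}, so that the left side reads $\sum_{k=0}^n(-1)^{k+1}\binom{n}{k}H_{k-1/2}$. Invoking the Lemma relation $H_{k-1/2}=2O_k-2\ln 2$, the constant term $-2\ln 2$ contributes a multiple of $\sum_{k=0}^n(-1)^{k+1}\binom{n}{k}$, which vanishes for $n\ge 1$ because $\sum_{k=0}^n(-1)^k\binom{n}{k}=0$; the surviving sum effectively starts at $k=1$ since $O_0=0$. On the right side I would evaluate $\binom{n-1/2}{n}$ using \eqref{eq.s0t6h30} with $r=s=n$, which collapses to $2^{-2n}\binom{2n}{n}$, and dividing through by $2$ yields the stated $2^{2n-1}/(n\binom{2n}{n})$. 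Identity \eqref{eq.fqnumdk} follows in exactly the same fashion by setting $b=1/2$ and using $H_{k+1/2}=2O_{k+1}-2\ln 2$ (with $H_{-1/2}=-2\ln 2$ from the Lemma); here I would instead read off $\binom{n+1/2}{n}$ from \eqref{eq.cwrdtmu} with $r=s=n$, which simplifies to $(2n+1)\,2^{-2n}\binom{2n}{n}$.

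For the two remaining identities I would apply the self-inverse binomial transform. Writing \eqref{eq.jm6rck7} as $B_n=\sum_{k=0}^n\binom{n}{k}(-1)^k O_k=-2^{2n-1}/(n\binom{2n}{n})$ for $n\ge 1$, with $B_0=O_0=0$, the inversion $O_n=\sum_{k=0}^n\binom{n}{k}(-1)^k B_k$ reproduces \eqref{eq.u1t9s6r} once the $k=0$ term is seen to vanish. The analogous transform of \eqref{eq.fqnumdk} gives \eqref{eq.b357dyz}, and this is the step needing the most care: the transformed sequence $C_n=\sum_{k=0}^n\binom{n}{k}(-1)^k O_{k+1}$ has $C_0=O_1=1\neq 0$, so the $k=0$ term of the inversion does not drop out. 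It is precisely this isolated boundary contribution that produces $O_{n+1}=1+\sum_{k=1}^n\binom{n}{k}(-1)^{k+1}2^{2k-1}/(k(2k+1)\binom{2k}{k})$, supplying the $-1$ on the right side of \eqref{eq.b357dyz}.

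I expect the only genuinely delicate points to be this bookkeeping of the $k=0$ contribution in the last inversion and the correct reading of the half-integer binomial evaluations \eqref{eq.cwrdtmu} and \eqref{eq.s0t6h30} at $r=s=n$; everything else reduces to the cancellation of the $\ln 2$ constant and routine algebraic simplification.
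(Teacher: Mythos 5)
Your proposal is correct and follows essentially the same route as the paper: specialize \eqref{main_id22} at $b=\pm 1/2$ (absorbing the constant $-2\ln 2$, which the paper handles by subtracting $H_c$ with $c=-1/2$), evaluate the half-integer binomials via \eqref{eq.cwrdtmu} and \eqref{eq.s0t6h30} at $r=s=n$, and obtain \eqref{eq.u1t9s6r} and \eqref{eq.b357dyz} by binomial-transform inversion. Your explicit bookkeeping of the nonvanishing $k=0$ term $C_0=O_1=1$, which produces the $-1$ in \eqref{eq.b357dyz}, is a detail the paper leaves implicit, and you have it exactly right.
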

\begin{proof}
Write~\eqref{main_id22} as
\begin{equation*}
\sum_{k = 0}^n {( - 1)^{k + 1} \binom{{n}}{k}\left( {H_{k + b} - H_c } \right)} = \frac{1}{{n\binom{{n + b}}{n}}}.
\end{equation*}
Evaluate at $(b,c)=(-1/2,-1/2)$ and at $(b,c)=(1/2,-1/2)$ to obtain~\eqref{eq.jm6rck7} and~\eqref{eq.fqnumdk}. Identity~\eqref{eq.u1t9s6r} is the binomial transform of~\eqref{eq.jm6rck7} while~\eqref{eq.b357dyz} is the transform of~\eqref{eq.fqnumdk}.
\end{proof}

\begin{theorem}\label{main_odd4}
If $n$ is a non-negative integer, then
\begin{gather}
\sum_{k = 1}^n \binom{{n}}{k}\frac{{(- 1)^{k + 1} k}}{(2k + 1)^2} = \frac{{2^{2n - 1} (O_{n + 1} - 1)}}{(2n + 1)\binom{{2n}}{n}}, \\
\sum_{k = 1}^n \binom{{n}}{k}\frac{{(- 1)^{k + 1} k}}{(2k - 1)^2} = \frac{{2^{2n - 1} O_n}}{\binom{{2n}}{n}}, \label{eq.main_odd4_2} \\
\sum_{k = 1}^n \binom{{n}}{k} ( - 1)^{k - 1} \frac{{2^{2k - 1} (O_{k + 1} - 1)}}{(2k + 1)\binom{{2k}}{k}} = \frac{n}{(2n + 1)^2}, \\
\sum_{k = 1}^n \binom{{n}}{k} ( - 1)^{k + 1}\frac{2^{2k - 1} O_k }{\binom{{2k}}{k}} = \frac{n}{(2n - 1)^2} \label{eq.hc2fi1o}.
\end{gather}
\end{theorem}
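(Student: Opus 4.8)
The plan is to read all four identities off the earlier corollary~\eqref{cor_id3}, specializing its free parameter $b$ to the half-integers $\pm 1/2$ for the first two, and then taking inverse binomial transforms for the last two. This follows the same template used in Theorems~\ref{main_odd1} and~\ref{main_odd3}, so no new machinery is needed beyond the half-integer evaluations collected in Lemma~\ref{lem.czxfdu7} and the Gould reductions~\eqref{eq.cwrdtmu} and~\eqref{eq.s0t6h30}.

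For the first identity I would set $b=1/2$ in~\eqref{cor_id3}, rewriting the left side via $(2k+1)^2 = 4(k+1/2)^2$, so that the sum becomes $\tfrac{1}{4}\sum_{k=1}^n\binom{n}{k}(-1)^{k+1}\,k/(k+1/2)^2$. On the right, the relation $H_{n+1/2}-H_{1/2}=2(O_{n+1}-1)$ from Lemma~\ref{lem.czxfdu7} handles the harmonic part, while~\eqref{eq.cwrdtmu} with $r=s=n$ gives $\binom{n+1/2}{n}=(2n+1)\,2^{-2n}\binom{2n}{n}$. Collecting the powers of $2$ then yields the stated right-hand side. The second identity~\eqref{eq.main_odd4_2} is identical in spirit with $b=-1/2$: now $(2k-1)^2=4(k-1/2)^2$, the harmonic part is $H_{n-1/2}-H_{-1/2}=2O_n$ from~\eqref{eq.plh634k}, and~\eqref{eq.s0t6h30} with $r=s=n$ collapses to $\binom{n-1/2}{n}=2^{-2n}\binom{2n}{n}$.

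For the remaining two identities I would invoke the binomial inversion $g_n=\sum_{k}(-1)^k\binom{n}{k}f_k \iff f_n=\sum_{k}(-1)^k\binom{n}{k}g_k$. Absorbing the sign in the first identity, $\sum_{k=0}^n(-1)^k\binom{n}{k}\,k/(2k+1)^2 = -2^{2n-1}(O_{n+1}-1)/\bigl((2n+1)\binom{2n}{n}\bigr)$ (the $k=0$ term being zero), inversion with $f_k=k/(2k+1)^2$ recovers $n/(2n+1)^2$ on the left and produces the sum of the third identity on the right once the sign is restored. The fourth identity~\eqref{eq.hc2fi1o} is the inverse transform of~\eqref{eq.main_odd4_2} in exactly the same manner. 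In each case one checks that the $k=0$ terms vanish (since $O_0=0$ and $O_1=1$), which is what lets the transformed sums begin at $k=1$.

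Since every ingredient is already in place, there is no genuine analytic obstacle; the difficulty here is purely clerical. I expect the main nuisance to be tracking the powers of $2$ through the binomial reductions $\binom{n\pm 1/2}{n}$ and keeping the signs and summation ranges consistent when passing to the inverse transforms (the interchange $(-1)^{k+1}=(-1)^{k-1}$ together with the vanishing $k=0$ term). Getting those bookkeeping details right is all that separates the specialization of~\eqref{cor_id3} from the four displayed formulas.
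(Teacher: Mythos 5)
Your proposal is correct and follows exactly the paper's own route: the paper likewise derives the first two identities by specializing~\eqref{cor_id3} at $b=\pm 1/2$ and obtains the last two as their inverse binomial transforms. Your half-integer evaluations via Lemma~\ref{lem.czxfdu7} and the Gould reductions~\eqref{eq.cwrdtmu},~\eqref{eq.s0t6h30}, as well as the vanishing of the $k=0$ terms, all check out.
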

\begin{proof}
The first two results follow from \eqref{cor_id3}. The other two identities are the inverse binomial transforms of the former.
\end{proof}

\begin{remark}
Identity \eqref{eq.hc2fi1o} is a rediscovery of identity \eqref{eq.fhj2wwz}. Also, combining the proof of \eqref{eq.fhj2wwz}
with \eqref{eq.hc2fi1o} yields \eqref{eq.xyz1}.
\end{remark}

\begin{theorem}\label{main_odd5}
If $n$ is a non-negative integer, then
\begin{gather}
\sum_{k = 0}^n \binom{{n}}{k} (- 1)^{k - 1} \frac{2^{2k} H_k}{(2k+1) \binom{{2k}}{k}} 
= \frac{2O_{n + 1} }{2n + 1} - \frac{2}{(2n + 1)^2}\label{eq.zmrbc4e}, \\
\sum_{k = 0}^n \binom{{n}}{k} (- 1)^k \frac{2^{2k} H_k}{\binom{{2k}}{k}} = \frac{2O_n}{{2n - 1}} - \frac{4n}{(2n - 1)^2} \label{eq.mtof7yp}, \\
\sum_{k = 0}^n \binom{{n}}{k} (- 1)^k \frac{O_{k+1}}{2k+1} 
= \frac{2^{2n+1}}{n+1} \frac{O_{n+1}}{\binom{2n+2}{n+1}} - \frac{2^{2n-1}}{2n+1} \frac{H_n}{\binom{2n}{n}} \label{eq.xxyyzz1}, \\
\sum_{k = 0}^n \binom{{n}}{k} (- 1)^k \frac{O_{k}}{2k-1} 
= \frac{2^{2n-1}}{\binom{2n}{n}} \left ( H_n - 2O_{n} \right ), \label{eq.xxyyzz2}\\
\sum_{k = 1}^n {\left( { - 1} \right)^{k - 1} \binom{{n}}{k}\frac{{2^{2k} }}{{\binom{{2k}}{k}}}H_{2k} }  = \frac{{4n}}{{\left( {2n - 1} \right)^2 }} - \frac{{O_n }}{{2n - 1}}\label{eq.yq8ibvs}.
\end{gather}
\end{theorem}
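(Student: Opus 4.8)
The plan is to derive all five identities from results already established in the paper, using only two tools: substitution of half-integer values for $b$ together with the conversions in Lemma~\ref{lem.czxfdu7} and Gould's binomial evaluations \eqref{eq.cwrdtmu}--\eqref{eq.s0t6h30}, and the (self-inverse) binomial transform. Nothing genuinely new beyond careful bookkeeping is expected.

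First I would obtain \eqref{eq.zmrbc4e} by setting $b=1/2$ in \eqref{eq.hzbwh6o}. On the left, \eqref{eq.cwrdtmu} with $r=s=k$ gives $\binom{k+1/2}{k}=(2k+1)2^{-2k}\binom{2k}{k}$, so that $\binom{k+1/2}{k}^{-1}=2^{2k}/\bigl((2k+1)\binom{2k}{k}\bigr)$, producing exactly the summand in \eqref{eq.zmrbc4e}. On the right, the relation $H_{n+1/2}-H_{1/2}=2(O_{n+1}-1)$ from Lemma~\ref{lem.czxfdu7} together with $\tfrac{1/2}{n+1/2}=\tfrac{1}{2n+1}$ and $\tfrac{n}{(n+1/2)^2}=\tfrac{4n}{(2n+1)^2}$ collapses the right-hand side to $\tfrac{2O_{n+1}}{2n+1}-\tfrac{2}{(2n+1)^2}$. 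Identity \eqref{eq.mtof7yp} comes the same way with $b=-1/2$: now \eqref{eq.s0t6h30} with $r=s=k$ gives $\binom{k-1/2}{k}=2^{-2k}\binom{2k}{k}$, hence $\binom{k-1/2}{k}^{-1}=2^{2k}/\binom{2k}{k}$, and \eqref{eq.plh634k} supplies $H_{n-1/2}-H_{-1/2}=2O_n$; after negating both sides (to turn $(-1)^{k-1}$ into $(-1)^k$) the right-hand side becomes $\tfrac{2O_n}{2n-1}-\tfrac{4n}{(2n-1)^2}$.

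Next, I would read \eqref{eq.zmrbc4e} and \eqref{eq.mtof7yp} as statements that a certain sequence is the binomial transform of the right-hand side, and invert. Applying the inverse transform to \eqref{eq.zmrbc4e} expresses $2^{2n}H_n/\bigl((2n+1)\binom{2n}{n}\bigr)$ as $\sum_{k}(-1)^k\binom{n}{k}\bigl(\tfrac{2O_{k+1}}{2k+1}-\tfrac{2}{(2k+1)^2}\bigr)$; substituting the known evaluation \eqref{eq.main_odd1_1} for the second sum and solving for the remaining $O_{k+1}/(2k+1)$ sum yields \eqref{eq.xxyyzz1}. In the same fashion, inverting \eqref{eq.mtof7yp} and eliminating the $k/(2k-1)^2$ sum via \eqref{eq.main_odd4_2} gives \eqref{eq.xxyyzz2}. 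Finally, \eqref{eq.yq8ibvs} follows by splitting $H_{2k}=\tfrac12 H_k+O_k$ from \eqref{eq.h9wjxs0}: the $H_k$-part is handled by \eqref{eq.mtof7yp} and the $O_k$-part by \eqref{eq.hc2fi1o}, and adding the two contributions (the two $\tfrac{2n}{(2n-1)^2}$ pieces combining to $\tfrac{4n}{(2n-1)^2}$) reproduces the claimed right-hand side.

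The main obstacle is organizational rather than conceptual: keeping the constant factors and signs straight through the repeated use of the $\tfrac{1/2}{n\pm1/2}$ and $\tfrac{n}{(n\pm1/2)^2}$ reductions, and confirming that the two known sums \eqref{eq.main_odd1_1} and \eqref{eq.main_odd4_2} enter the inverse transforms with exactly the coefficients needed to isolate the target sums. Care is also needed with the $k=0$ terms (where $H_0=0$ makes several summands vanish), so that shifting between the $\sum_{k=0}^n$ and $\sum_{k=1}^n$ forms is legitimate.
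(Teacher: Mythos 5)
Your proposal is correct and follows exactly the paper's own route: the first two identities come from the substitutions $b=\pm 1/2$ in \eqref{eq.hzbwh6o} (with the Gould evaluations and Lemma~\ref{lem.czxfdu7}), identities \eqref{eq.xxyyzz1} and \eqref{eq.xxyyzz2} from the inverse binomial transforms of \eqref{eq.zmrbc4e} and \eqref{eq.mtof7yp} combined with \eqref{eq.main_odd1_1} and \eqref{eq.main_odd4_2} respectively, and \eqref{eq.yq8ibvs} from splitting $H_{2k}=\tfrac12 H_k+O_k$ and invoking \eqref{eq.mtof7yp} and \eqref{eq.hc2fi1o}. Your write-up is in fact more detailed than the paper's terse proof, and all the constant-tracking checks out.
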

\begin{proof}
The first two results follow from \eqref{eq.hzbwh6o}. Identity \eqref{eq.xxyyzz1} is a consequence of the inverse binomial relation 
of \eqref{eq.zmrbc4e} in conjunction with identity \eqref{eq.main_odd1_1}. Identity \eqref{eq.xxyyzz2} is a consequence of the inverse binomial relation of \eqref{eq.mtof7yp} in conjunction with identity \eqref{eq.main_odd4_2}. Identity~\eqref{eq.yq8ibvs} is a consequence of~\eqref{eq.hc2fi1o} and~\eqref{eq.mtof7yp} on account of the first identity in~\eqref{eq.h9wjxs0}.
\end{proof}

\begin{theorem}
If $n$ is a non-negative integer, then
\begin{equation}
\sum_{k = 0}^n {\binom{{n}}{k}\frac{{\left( { - 1} \right)^{k - 1} }}{{\left( {2k - 1} \right)^3 }}}  = \dfrac{{2^{2n - 1} }}{{\binom{{2n}}{n}}}\left( {\left( {O_n  - 1} \right)^2  + O_n^{(2)}  + 1} \right).
\end{equation}
\end{theorem}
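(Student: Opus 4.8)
The plan is to specialize the cubic identity \eqref{eq.ufus43q} at $b = -1/2$ and then translate every quantity appearing on both sides into odd harmonic numbers. Since $(k - 1/2)^3 = \tfrac{1}{8}(2k-1)^3$, putting $b = -1/2$ turns the left-hand side of \eqref{eq.ufus43q} into $8\sum_{k=0}^n \binom{n}{k}(-1)^k/(2k-1)^3$, which is exactly $-8$ times the sum I want (the $(-1)^{k-1}$ in the target accounts for the extra sign). So after evaluating the right-hand side of \eqref{eq.ufus43q} at $b=-1/2$ I will divide by $-8$ and read off the claimed formula.

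For the right-hand side, first I would rewrite the binomial factor using \eqref{eq.cm3vohl} with $r = n$, which gives $\binom{n-1/2}{n+1}^{-1} = -(n+1)\,2^{2n+1}\binom{2n}{n}^{-1}$; the prefactor $1/(2n+2)$ then combines with this to leave a clean power of two and a single $\binom{2n}{n}^{-1}$. For the first-order square in the bracket I would invoke \eqref{eq.pobmr6h}, namely $H_{n-1/2} - H_{-3/2} = 2(O_n - 1)$, so that $\left(H_{n+b} - H_{b-1}\right)^2 = 4(O_n-1)^2$ at $b=-1/2$.

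The main obstacle is the second-order piece $H_{n-1/2}^{(2)} - H_{-3/2}^{(2)}$, since the lemmas in the excerpt only record first-order half-integer relations and no analog is provided for $H^{(2)}$. I would supply it through the trigamma function: by \eqref{eq.ezvh9zc} one has $H_{n-1/2}^{(2)} - H_{-3/2}^{(2)} = \psi_1(-1/2) - \psi_1(n+1/2)$, and then evaluating the series $\psi_1(x) = \sum_{k\ge 0}(k+x)^{-2}$ at these half-integer arguments, using $\sum_{k\ge 1}(2k-1)^{-2} = \pi^2/8$, gives $\psi_1(n+1/2) = \tfrac{\pi^2}{2} - 4O_n^{(2)}$ and $\psi_1(-1/2) = \tfrac{\pi^2}{2} + 4$, hence $H_{n-1/2}^{(2)} - H_{-3/2}^{(2)} = 4\bigl(O_n^{(2)} + 1\bigr)$. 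This is the only genuinely new ingredient, so it is where I would concentrate the care.

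Assembling these pieces, the right-hand side of \eqref{eq.ufus43q} at $b = -1/2$ collapses to $-2^{2n+2}\binom{2n}{n}^{-1}\bigl((O_n-1)^2 + O_n^{(2)} + 1\bigr)$; dividing by $-8$ produces the stated result $2^{2n-1}\binom{2n}{n}^{-1}\bigl((O_n-1)^2 + O_n^{(2)} + 1\bigr)$. A quick check at $n = 0$ and $n = 1$ (both sides equal $1$ and $2$ respectively) confirms that the signs and constants line up correctly.
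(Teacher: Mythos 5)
Your proposal is correct and follows essentially the same route as the paper: specialize \eqref{eq.ufus43q} at $b=-1/2$, convert the binomial factor via \eqref{eq.cm3vohl} and the first-order difference via \eqref{eq.pobmr6h}, and reduce the second-order piece to $4\bigl(O_n^{(2)}+1\bigr)$. The only cosmetic difference is that you evaluate $H_{n-1/2}^{(2)}-H_{-3/2}^{(2)}$ directly from the trigamma series, whereas the paper cites the equivalent relation $H_{n-1/2}^{(2)}=-2\zeta(2)+4O_n^{(2)}$ anchored at the known value $H_{-1/2}^{(2)}=-2\zeta(2)$; both amount to the same computation.
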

\begin{proof}
Set $b=-1/2$ in~\eqref{eq.ufus43q} and use Lemma~\ref{lem.czxfdu7} and identity~\ref{eq.s0t6h30}. Use also 
\begin{equation}\label{eq.cqhkqwq}
H_{n - 1/2}^{(2)}  =  - 2\zeta (2) + 4O_n^{(2)}
\end{equation}
which follows from the known value $H_{-1/2}^{(2)}=-2\zeta(2)$ since
\begin{equation*}
H_{n - 1/2}^{(2)}  - H_{ - 1/2}^{(2)}  = 4O_n^{(2)},
\end{equation*}
where $O_n^{(2)}$ is the $n^{th}$ second order odd harmonic number defined by
\begin{equation*}
O_n^{(2)}  = \sum_{k = 1}^n {\frac{1}{{\left( {2k - 1} \right)^2 }}}.
\end{equation*}
\end{proof}
\begin{theorem}
If $n$ is a positive integer, then
\begin{gather}
\sum_{k = 1}^n {\left( { - 1} \right)^{k - 1} \binom{{n}}{k}O_k^{(2)} }  = \frac{{2^{2n - 1} }}{{\binom{{2n}}{n}}}\frac{{O_n }}{n},\label{eq.awov07j}\\
\sum_{k = 1}^n {\left( { - 1} \right)^{k - 1} \binom{{n}}{k}\frac{{2^{2k - 1} }}{{\binom{{2k}}{k}}}\frac{{O_k }}{k}}  = O_n^{(2)}\label{eq.y491lmb} .
\end{gather}
\end{theorem}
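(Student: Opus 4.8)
The plan is to derive \eqref{eq.awov07j} as the $b=-1/2$ specialization of the second-order harmonic-number identity \eqref{eq.soe68bj}, and then to obtain \eqref{eq.y491lmb} for free as its inverse binomial transform. First I would set $b=-1/2$ in \eqref{eq.soe68bj}. The right-hand side $(H_{n-1/2}-H_{-1/2})/(n\binom{n-1/2}{n})$ is simplified using \eqref{eq.plh634k}, which gives $H_{n-1/2}-H_{-1/2}=2O_n$, together with the half-integer binomial evaluation \eqref{eq.s0t6h30} taken at $r=s=n$, namely $\binom{n-1/2}{n}=2^{-2n}\binom{2n}{n}$. These reduce the right-hand side to $2^{2n+1}O_n/(n\binom{2n}{n})$.

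The left-hand side requires expressing $H_{k-1/2}^{(2)}$ through $O_k^{(2)}$, for which I would invoke \eqref{eq.cqhkqwq}, i.e. $H_{k-1/2}^{(2)}=-2\zeta(2)+4O_k^{(2)}$. Splitting the sum accordingly produces a constant multiple of $\sum_{k=0}^n(-1)^{k-1}\binom{n}{k}$, which vanishes for every $n\geq 1$, plus the term $4\sum_{k=1}^n(-1)^{k-1}\binom{n}{k}O_k^{(2)}$ (the $k=0$ summand drops because $O_0^{(2)}=0$). Equating the two sides and dividing by $4$ gives \eqref{eq.awov07j} exactly.

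For \eqref{eq.y491lmb} I would view \eqref{eq.awov07j} as one half of a binomial transform pair. Rewriting it as $\sum_{k=0}^n(-1)^k\binom{n}{k}O_k^{(2)}=-2^{2n-1}O_n/(n\binom{2n}{n})$ (which holds for $n\geq 1$, with both sides vanishing at $n=0$), the standard inversion $b_n=\sum_{k=0}^n(-1)^k\binom{n}{k}a_k \iff a_n=\sum_{k=0}^n(-1)^k\binom{n}{k}b_k$ applied with $a_k=O_k^{(2)}$ immediately yields $O_n^{(2)}=\sum_{k=1}^n(-1)^{k-1}\binom{n}{k}2^{2k-1}O_k/(k\binom{2k}{k})$, which is \eqref{eq.y491lmb}.

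The only delicate point is the boundary bookkeeping: one must confirm that the $k=0$ contribution truly drops out both in the $O_k^{(2)}$ sum (since $O_0^{(2)}=0$) and in the constant piece (through the vanishing alternating binomial sum), and that the value $H_{-1/2}^{(2)}=-2\zeta(2)$ underlying \eqref{eq.cqhkqwq} is applied consistently. Once these are checked, every remaining manipulation is a routine substitution from Lemma~\ref{lem.czxfdu7} and the half-integer binomial lemma, so I anticipate no genuine difficulty beyond careful tracking of the factor $4$ and the signs.
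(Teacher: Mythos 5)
Your proposal is correct and follows essentially the same route as the paper: specialize \eqref{eq.soe68bj} at $b=-1/2$, simplify via \eqref{eq.plh634k}, \eqref{eq.s0t6h30} (at $r=s=n$) and \eqref{eq.cqhkqwq} to get \eqref{eq.awov07j}, then obtain \eqref{eq.y491lmb} by binomial transform inversion. Your write-up merely makes explicit the bookkeeping (the vanishing alternating sum absorbing the $-2\zeta(2)$ constant, the $k=0$ term, and the factor $4$) that the paper leaves implicit.
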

\begin{proof}
Set $b=-1/2$ in~\eqref{eq.soe68bj} and use Lemma~\ref{lem.czxfdu7} and also~\eqref{eq.s0t6h30} and~\eqref{eq.cqhkqwq} to obtain~\eqref{eq.awov07j}. Identity~\eqref{eq.y491lmb} is the inverse of~\eqref{eq.awov07j}.
\end{proof}
\begin{theorem}
If $n$ is a non-negative integer, then
\begin{gather}
\sum_{k = 0}^n {\left( { - 1} \right)^{k - 1} \binom{{n}}{k}\frac{{2^{2k} }}{{\binom{{2k}}{k}}}H_k O_k }  = \frac{{8n}}{{\left( {2n - 1} \right)^3 }} - \frac{{4n\,O_n }}{{\left( {2n - 1} \right)^2 }} - \frac{{2O_n^{(2)} }}{{\left( {2n - 1} \right)}},\\
\sum_{k = 0}^n {\left( { - 1} \right)^k \binom{{n}}{k}\frac{{2^{2k} }}{{\binom{{2k}}{k}}}H_k }  = \frac{{2O_n }}{{\left( {2n - 1} \right)}} - \frac{{4n}}{{\left( {2n - 1} \right)^2 }}\label{eq.wsi7oju}.
\end{gather}
\end{theorem}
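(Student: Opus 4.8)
The plan is to treat the two displayed identities separately. The second identity \eqref{eq.wsi7oju} requires no new work: both its summand and its right-hand side coincide verbatim with those of \eqref{eq.mtof7yp}, so it is merely a restatement of that already-established identity, and I would simply record this fact.

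For the first identity the natural source is the product formula \eqref{eq.iauweap}, since its summand carries the factor $H_k H_{k+b}$, and the choice $b=-1/2$ turns $H_{k+b}=H_{k-1/2}$ into an expression linear in $O_k$. First I would specialize \eqref{eq.iauweap} to $b=-1/2$. The denominator becomes $\binom{k-1/2}{-1/2}=\binom{k-1/2}{k}$, which by \eqref{eq.s0t6h30} with $r=s=k$ equals $2^{-2k}\binom{2k}{k}$; hence $\binom{k-1/2}{-1/2}^{-1}=2^{2k}/\binom{2k}{k}$, producing exactly the factor appearing in the target. Next I would invoke Lemma~\ref{lem.czxfdu7}, specifically $H_{k-1/2}=2O_k-2\ln 2$, to split the summand as
\[
H_k H_{k-1/2}=2H_kO_k-2\ln 2\,H_k.
\]
This breaks the left-hand side into twice the target sum $\sum(-1)^{k-1}2^{2k}\binom{2k}{k}^{-1}H_kO_k$ together with a $\ln 2$-multiple of $\sum(-1)^{k-1}2^{2k}\binom{2k}{k}^{-1}H_k$, and the latter is evaluated by \eqref{eq.wsi7oju} (equivalently \eqref{eq.mtof7yp}).

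On the right-hand side of \eqref{eq.iauweap} I would substitute $H_{-1/2}=-2\ln 2$ and $n+b=(2n-1)/2$ from Lemma~\ref{lem.czxfdu7}, $H_{n-1/2}-H_{-1/2}=2O_n$ from \eqref{eq.plh634k}, and $H_{n-1/2}^{(2)}-H_{-1/2}^{(2)}=4O_n^{(2)}$ (from \eqref{eq.cqhkqwq}). The decisive step, and the one I expect to be the main obstacle, is the bookkeeping of the terms proportional to $\ln 2$: they enter on the left through the split of $H_kH_{k-1/2}$ and on the right through every occurrence of $H_{-1/2}=-2\ln 2$. The structural point that makes the theorem true is that the left-hand $\ln 2$-contribution reduces—via \eqref{eq.wsi7oju}—to precisely $2\ln 2\bigl(\tfrac{2O_n}{2n-1}-\tfrac{4n}{(2n-1)^2}\bigr)$, which matches the aggregate $\ln 2$-terms arising on the right.

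Once these transcendental pieces cancel, dividing through by $2$ leaves exactly
\[
\frac{8n}{(2n-1)^3}-\frac{4n\,O_n}{(2n-1)^2}-\frac{2O_n^{(2)}}{2n-1},
\]
as claimed. The residual effort is then routine algebraic simplification of rational multiples of $O_n$, $O_n^{(2)}$, and powers of $(2n-1)$, so the crux of the argument lies entirely in the clean cancellation of the $\ln 2$ terms.
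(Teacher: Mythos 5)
Your proposal is correct and follows essentially the same route as the paper: set $b=-1/2$ in \eqref{eq.iauweap}, convert the half-integer quantities via Lemma~\ref{lem.czxfdu7}, \eqref{eq.s0t6h30} and \eqref{eq.cqhkqwq}, and separate the rational part from the $\ln 2$ part, with \eqref{eq.wsi7oju} recognized as a restatement of \eqref{eq.mtof7yp}. The only cosmetic difference is that the paper ``equates rational coefficients'' on both sides (thereby obtaining \eqref{eq.wsi7oju} as a byproduct), whereas you import the already-proved \eqref{eq.mtof7yp} to cancel the $\ln 2$ terms directly --- the same computation in a slightly different logical order, which has the minor advantage of not needing to invoke the irrationality of $\ln 2$.
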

\begin{proof}
Set $b=-1/2$ in~\eqref{eq.iauweap}. Use Lemma~\ref{lem.czxfdu7},~\eqref{eq.s0t6h30} and~\eqref{eq.cqhkqwq}. Equate rational coefficients from both sides. Identity~\eqref{eq.wsi7oju} is a rediscovery of~\eqref{eq.mtof7yp}.
\end{proof}

\section{Final comments}

In these notes, we have demonstrated how a combinatorial identity attributed to Frisch can be used to prove a series of 
harmonic number and odd harmonic number identities. Some of the results that we derived are known and here we provide 
final comments and give further references. \\
We begin by pointing out that identity \eqref{id_Hk_squared} can also be found in a recent article by Batir \cite[Identity 18]{Batir}.
Next, identity \eqref{binomial_frac_id} can be restated as
\begin{equation*}
\sum_{k=0}^n \binom{n}{k} (-1)^k \frac{b}{k+b} = \prod_{k=1}^n \frac{k}{b + k}. 
\end{equation*}
This identity is known and two probabilistic proofs were given recently by Peterson \cite{Peterson} and Nakata \cite{Nakata}.
Peterson \cite{Peterson} writes \eqref{cor_id1} in the form
\begin{equation*}
\sum_{k=0}^n \binom{n}{k} (-1)^k \left (\frac{b}{b+k}\right )^2 = \prod_{k=1}^n \frac{k}{b + k} \left (1 + \sum_{k=1}^n \frac{b}{b+k}\right ),
\end{equation*}
and also states an expression for the generalization involving an additional parameter $m$
\begin{equation}\label{Peterson}
\sum_{k=0}^n \binom{n}{k} (-1)^k \left (\frac{b}{b+k}\right )^m, \qquad m\geq 1.
\end{equation}
In 2019, Bai and Luo \cite{Bai} derived a new expression for Peterson's identity \eqref{Peterson} 
using a partial fraction decomposition and involving generalized harmonic numbers. As applications of their main result, they
stated some harmonic number identities. One of their special cases is our identity \eqref{Bai_id}. \\

It is remarkable that a simple expression for \eqref{Peterson} is also hidden in Frisch's identity \eqref{Id_Frisch}.
Indeed, setting $c=1$ in \eqref{Id_Frisch} yields \eqref{binomial_frac_id}, i.e.,
\begin{equation*}
\sum_{k=0}^n \binom{n}{k} \frac{(-1)^k}{b+k} = \frac{1}{b \binom{n+b}{b}}. 
\end{equation*}
Differentiating both sides $m$ times w.r.t. $b$ gives
\begin{equation}
\sum_{k=0}^n \binom{n}{k} \frac{(-1)^k}{(b+k)^m} = \frac{(-1)^{m-1}}{(m-1)!}\frac{d^{m-1}}{db^{m-1}}\frac{1}{b \binom{n+b}{b}}, 
\end{equation}
which shows such a simple expression.

\end{document}